\newcommand\rev[1]{#1}
\newcommand\rerev[1]{#1}
\newcommand{\sgn}{\text{sgn}}
\newcommand{\drv}[2]{\frac{\rmd#1}{\rmd#2}}
\newcommand{\calC}{{\mathcal C}}
\newcommand{\calP}{{\mathcal P}}
\newcommand{\bbE}{{\mathbb E}}
\newcommand{\bbN}{{\mathbb N}}
\newcommand{\bbR}{{\mathbb R}}
\newcommand{\bbZ}{{\mathbb Z}}
\newcommand{\bfone}{{\mathbf 1}}
\newcommand{\bfd}{{\mathbf d}}
\newcommand{\bfi}{{\mathbf i}}
\newcommand{\rmd}{{\mathrm d}}
\newcommand{\tile}{{\tilde e}}
\newcommand{\tilr}{{\tilde r}}
\newcommand{\tilC}{{\tilde C}}
\newcommand{\tilD}{{\tilde D}}
\newcommand{\tilE}{{\tilde E}}
\newcommand{\tilK}{{\tilde K}}
\newcommand{\tilR}{{\tilde R}}
\newcommand{\repeatable}[2]{%
    \global\@namedef{repeatable@#2}{#1}#1 \label{#2}  %変数 repeatable@#2 に #1 を代入．本文に #1 \label{#2} を表示．
}
\newcommand{\repeatref}[1]{%
    \@ifundefined{repeatable@#1}{NOT FOUND}{\footnote[0]{\eqref{#1}$ \displaystyle{ \@nameuse{repeatable@#1} } $}}%
    ~\eqref{#1}} % repeatable@#1 が未定義ならNOT FOUNDを表示．定義済みなら脚注に式番号と数式を表示して本文に式番号を表示．
\numberwithin{equation}{section}
\theoremstyle{definition}
\newtheorem{theorem}{Theorem}[section]
\newtheorem{proposition}{Proposition}[section]
\newtheorem{example}{Example}[section]
\newtheorem{assumption}{Assumption}[section]
\newtheorem{remark}{Remark}[section]
\begin{document}

\title{A fictitious-play finite-difference method\\ for linearly solvable mean field games}

\author{Daisuke Inoue}
\email{daisuke-inoue@mosk.tytlabs.co.jp}
\affiliation{%
Toyota Central R\&D Labs., Inc.\\
Bunkyo-ku, Tokyo 112-0004, Japan
% This line break forced with \textbackslash\textbackslash
}%
\author{Yuji Ito}
\affiliation{%
Toyota Central R\&D Labs., Inc.\\
Nagakute, Aichi 480-1192, Japan
% This line break forced with \textbackslash\textbackslash
}%
\author{Takahito Kashiwabara}
\affiliation{%
Graduate School of Mathematical Sciences, the University of Tokyo\\
3-8-1 Komaba, Meguro-ku, Tokyo 153-8914, Japan
% This line break forced with \textbackslash\textbackslash
}%
\author{Norikazu Saito}
\affiliation{%
Graduate School of Mathematical Sciences, the University of Tokyo\\
3-8-1 Komaba, Meguro-ku, Tokyo 153-8914, Japan
% This line break forced with \textbackslash\textbackslash
}%
\author{Hiroaki Yoshida}% <-this % stops a space
\affiliation{%
Toyota Central R\&D Labs., Inc.\\
Bunkyo-ku, Tokyo 112-0004, Japan
% This line break forced with \textbackslash\textbackslash
}%

\begin{abstract}
  \rev{An iterative finite difference scheme} for mean field games (MFGs) is proposed.
  The target MFGs are derived from control problems for multidimensional systems with advection terms.
  For such MFGs, linearization using the Cole-Hopf transformation and iterative computation using fictitious play are introduced.
  This leads to an implementation-friendly algorithm that iteratively solves explicit schemes.
  The convergence properties of the proposed scheme are mathematically proved by tracking the error of the variable through iterations.
  Numerical calculations show that the proposed method works stably for both one- and two-dimensional control problems.
\end{abstract}

% \keywords{mean field games; finite difference methods; fictitious play; Cole-Hopf transformation.}

\maketitle

\section{Introduction}

\rev{\emph{Mean field games (MFGs)} are models for approximating the Nash equilibrium for large-scale control problems in which an infinite number of homogeneous microscopic systems compete with each other~\cite{Lasry2007Mean}}.
This model is obtained by approximating the many-body interaction terms in each microscopic system as a two-body interaction between one system and the density distribution function formed by the other systems.
The resulting MFGs consist of two partial differential equations: a forward equation called the Fokker-Planck equation (FP equation), which describes the dynamics of the density distribution, and a backward equation called the Hamilton-Jacobi-Bellman equation (HJB equation), which describes the dynamics of the control input. 
The initial and terminal conditions are posed respectively for the FP and HJB equations.
The existence and uniqueness of solutions for MFGs have been proved under various assumptions~\cite{Cardaliaguet2010Notes, Gueant2011Mean}.

Several numerical methods for solving MFGs have been proposed. 
\rev{References~\cite{Achdou2012Mean, Achdou2010Meana, Achdou2012Iterative} propose methods for solving the discretized equations using Newton's method,}
and the convergence of the solution of the scheme to the solution of the MFGs has been proved.
To decouple the MFGs into HJB and FP equations, \rev{several} methods use a \rev{\emph{fixed-point iteration}} in which the FP and the HJB equations are solved alternately.
\rev{References~\cite{Camilli2012semidiscrete, Carlini2014Fully} propose such fixed-point methods in which each equation is solved with the semi-Lagrangian scheme.}
They consider only a quadratic Hamiltonian, and the convergence analysis \rev{through the fixed-point iteration} itself is not undertaken.
In Ref.~\cite{Gueant2012Mean}, a finite difference method with a standard \rev{fixed-point iteration} is proposed for the system obtained by applying the \emph{Cole-Hopf transformation} to the original MFGs.
The transformed system consists of two \emph{linear} equations, which overcome the difficulties stemming from the nonlinearity of the original MFGs.
The same paper also addresses the convergence analysis of \rev{fixed-point iteration}; however, only a quadratic Hamiltonian is considered.
An alternate decoupling method, called \emph{fictitious play}, is proposed in Ref.~\cite{Cardaliaguet2017Learninga}. 
There, the existence and uniqueness of a general class of MFGs are proved using fictitious play iteration. 
Furthermore, the solutions of the fictitious play are not merely approximations of the MFG equations; the solutions involve a learning process until arriving at equilibrium by using the developed histories. 
While Ref.~\cite{Cardaliaguet2017Learninga} does not address numerical calculations, 
\rev{fictitious play is also known to help obtain numerical convergence when fixed-point iterations fail to converge}~\cite{Lauriere2021Numerical, Perrin2020Fictitious}.

\begin{figure}[t]
  \centering
  \begin{tikzpicture}
    \node at (0,0) [rectangle,draw] (mfg) {MFG};
    \node at (5,0.5) [rectangle,draw] (imfg) {Iterative MFG};
    \node at (10,0.5) [rectangle,draw] (ilsmfg) {Iterative LSMFG};
    \node at (15,0) [rectangle,draw] (scheme) {Proposed Scheme};
    \draw [-{Latex[length=1.5mm]}] (mfg) to [bend left=10,"\footnotesize{\centering fictitious play}"] (imfg);
    \draw [-{Latex[length=1.5mm]}] (imfg) to [bend left=10,"\footnotesize{\centering Cole-Hopf transform}"] (ilsmfg);
    \draw [-{Latex[length=1.5mm]}] (ilsmfg) to [bend left=10,"\footnotesize{\centering discretize}"] (scheme);
    \draw [-{Latex[length=1.5mm]}] (imfg) to [bend left=10,"\footnotesize\parbox{3cm}{\centering Prop.~\ref{prop:PDE-existence-uniform-fp-convergence}: converge as $k\to\infty$}"] (mfg);
    \draw [-{Latex[length=1.5mm]}] (ilsmfg) to [bend left=10,"\footnotesize\parbox{3cm}{\centering  Prop.~\ref{prop:lsmfg_derivation}: equivalent}"] (imfg);
    \draw [-{Latex[length=1.5mm]}] (scheme) to [bend left=10,"\footnotesize\parbox{3cm}{Thm.~\ref{thm:convergence-repeat}: converge as $\Delta x, \Delta t\to 0$}"] (ilsmfg);
    \draw [-{Latex[length=1.5mm]}] (scheme.south) to [bend left=13,"\footnotesize{\centering Thm.~\ref{thm:convergence-repeat-limit}: converge as $\Delta x, \Delta t\to 0$, $k\to\infty$}"] (mfg.south);
  \end{tikzpicture}
  \caption{
    Conceptual diagram of convergence analysis.
  }
  \label{fig:conceptual-diagram}
\end{figure}
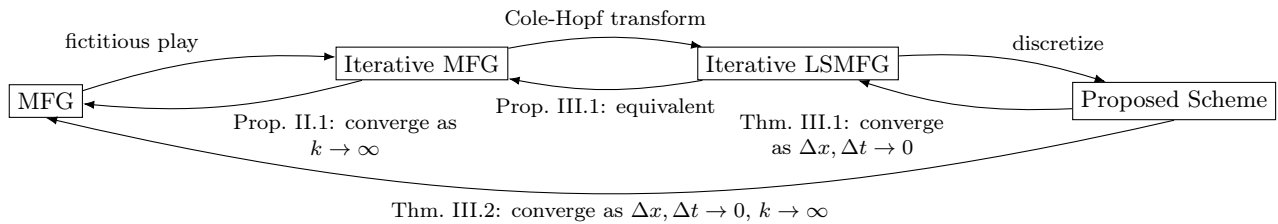

In the present paper, we propose a new numerical method for MFGs, using the same variable transformation technique as in Ref.~\cite{Gueant2012Mean}.
For this reason, we refer to the target MFGs as \emph{linearly solvable mean field games (LS-MFGs)}.
We discretize each equation with a finite difference scheme and adopt the fictitious play for the decoupling method. 
The contribution of our work can be summarized as follows.
\begin{enumerate}  
  \item A more general class of Hamiltonian than a quadratic $\rev{H(x,p)=\frac12 p^\top p}$ is addressed. 
  Specifically, our Hamiltonian is given as $\rev{H(x,p)=\frac12 p^\top B R^{-1}B^\top p -f(x)^\top p}$ (definitions of symbols will be described in the next section).  
  Actually, we are interested in the situation where the dynamics of the microscopic system include advection terms, which are terms containing $f(x)$ (see our target MFG \eqref{eq:LQHJB}--\eqref{eq:LQFP_initial}).
  Such advection terms are typically equipped by real-world control systems whose state is not directly controllable through the control input.
  \rerev{Thus, the problems involving such advection terms are recognized as important in the control community~\cite{nijmeijer1990nonlinear, Bagdasaryan2019OptimaL}.}
  However, in the previous studies \cite{Gueant2012Mean, Swiecicki2016Schr, Camilli2012semidiscrete,Carlini2014Fully}, the Hamiltonian is assumed to be $H(x,p)= \frac12 p^\top p$, which is more restrictive than our case.
  \rerev{References~\cite{Achdou2012Mean, Achdou2010Meana, Achdou2012Iterative} provide methods for solving MFGs with numerical Hamiltonians that satisfy appropriate conditions.
  Construction methods for such numerical Hamiltonians are given for some cases, e.g. $H(x,p) = \sup_a [p^\top a - L(x,a)]$ or $H(x,p) = \psi(x, |p|)$ for some function $\psi$.
  However, for our Hamiltonian with the advection term $f(x)$, the form of the numerical Hamiltonian is non-trivial.}
  For our target MFGs, we have shown that the Cole-Hopf transformation is applicable, whereby we find it possible to deal with the target MFGs as linearly solvable mean field games (LS-MFGs) (see Proposition \ref{prop:lsmfg_derivation}).

  \item 
  A finite difference scheme with fictitious play iteration is proposed for the multidimensional LS-MFG.
  Here, the derived LS-MFG is composed of two linear advection-diffusion-reaction equations.
  Since naive numerical calculations for this system lead to instability, we apply upwind differencing for the advection term, central differencing for the diffusion term, and \rev{implicit scheme for the reaction term. Such schemes are recently devised} to ensure that the solution of the scheme preserves positivity~\cite{Appadu2013Numericalc, Mickens1999Nonstandard, Mickens2000Nonstandard}.
  Consequently, our finite difference solutions satisfy the discrete maximum principle (see Propositions \ref{prop:bound_Psi} and \ref{prop:bound_tildePsi}). 
  This plays a crucial role in the convergence analysis described herein. 
  
  \item 
  The convergence properties of the proposed scheme are proved.
  More specifically, we have proved that the solution of the scheme converges to the solution of the MFG, \rev{in the limit where the discretization parameter is small and the number of iterations of fictitious play is large.
  This is carried out in two steps.
  The first step is showing the convergence of the discrete scheme to the fictitious play iterative system of LS-MFG (which we call the \emph{iterative LS-MFG}) in the limit of a small discretization parameter.
  The second step is to show the convergence of the iterative LS-MFG to the original MFG in the limit of a large number of fictitious play iterations.
  To perform the former step,}
  tracking the growth of the approximation errors of each equation through iterations is necessary.
  we thus start by proving the convergence properties of each forward and backward scheme (see Propositions \ref{thm:convergence} and \ref{thm:convergence-conservative}) and then perform a convergence analysis when the error circles back through the two equations
  \rev{(see Theorem \ref{thm:convergence-repeat}).
  The second step is accomplished by showing that the limit of the fictitious play solution converges to the MFG solution (see Proposition \ref{prop:PDE-existence-uniform-fp-convergence}) and that the solutions before and after the Cole-Hopf transformation correspond one-to-one (see Proposition \ref{prop:lsmfg_derivation}).
  The two steps together yield the final convergence theorem (see Theorem \ref{thm:convergence-repeat-limit}).
  A schematic diagram of the convergence analysis is shown in Fig.~\ref{fig:conceptual-diagram}.
  }
\end{enumerate}

In Section \ref{sec:MFG}, we briefly review the MFGs and their fictitious play.
We present our main results for the three novelties described above in Section \ref{sec:result}.
In Section \ref{sec:proof}, we give proof of the presented convergence theorem.
To confirm the validity of the proposed scheme, we perform numerical calculations for typical control problems for both one- and two-dimensional systems in Section \ref{sec:numeric}.
Finally, the conclusion is given in Section \ref{sec:conclusion}.

\textbf{Notation } 
The symbols $\partial_x$, $\partial_x\cdot$, and $\partial_{xx}$ denote the gradient operator, divergence operator, and Hessian operator, respectively.
The symbol $\Omega$ denotes a bounded closed set and $T>0$ denotes some positive real number.
For the matrices $A, B\in\bbR^{n\times n}$, the symbol $\Tr\{AB\}$ denotes $ \Tr\{AB\}= \sum_{i,j=1}^n A_{ij}B_{ji}$.
The set $\calC^{n}(\Omega)$ is the set of functions that are continuously differentiable for $n$-times.
\rev{The set $\calC^{n+\alpha}(\Omega)$ is the set of functions that are $\calC^n(\Omega)$ and their $n$-times derivatives are H\"{o}lder continuous of order $\alpha$.}
For any bounded continuous function $\phi:X\to\bbR$, we define $\|\phi\|_\infty \coloneqq\sup_{x\in X} |\phi(x)|$.
\rev{The set of bounded probability density functions defined on $\Omega$ is denoted as $\calP(\Omega)$ and the distance for $\mu,\nu\in\calP(\Omega)$ is denoted as
  \begin{align}
    \mathbf{d}_1(\mu, \nu)=\sup _h\left\{\int_{\Omega} h(x) [\mu(x) - \nu(x) ]\mathrm{d}x\right\},
  \end{align}
where the supremum is taken over all the maps $h: \Omega \to \bbR$ which are 1-Lipschitz continuous.
Let $U:\calP(\Omega)\to\bbR$ be a continuous map.
We say that the continuous map $\frac{\delta U}{\delta \mu}: \Omega\times\calP(\Omega)\to\bbR$ is the derivative of $U$ if, for any $\mu,\nu\in\calP(\Omega)$,
\begin{align}
  U\left(\nu\right)-U(\mu)=\int_0^1 \int_{\Omega} \frac{\delta U}{\delta \mu}\left(x, (1-s) \mu+s \nu\right)\left(\nu(x)-\mu(x)\right) \rmd x\mathrm{d} s.
\end{align}
}

\section{Review of mean field games and fictitious play}\label{sec:MFG}

Our target MFG equations \eqref{eq:LQHJB}--\eqref{eq:LQFP_initial} described below are not quite general, but are more general than those of Ref.~\cite{Gueant2012Mean}.
For the readers' convenience, we briefly recall a formal derivation of the MFG equations from the many-body control problems to clarify the motivation of this study.
Consider $N$ dynamical systems. 
The dynamics of the $i$th system is represented by the ordinary differential equation
\begin{align}\label{eq:general_dynamics}
  \rmd x_i(t) &= \qty{ f(x_i(t))+Ba_i(t)} \rmd t + \sigma \rmd w_i(t),\ t\in[0,T],\\
  x_i(0) &= x_i^0,\qquad i=1,\ldots,N,
\end{align}
where the variable $x_i (t) \in \bbR^n$ represents the state of the system at time $t\in [0,T]$ ($T>0$), and $a_i (t) \in \bbR^m$ denotes the \rev{deterministic} control input of the system at time $t$.
The variable $w_i(t)\in\bbR^n$ denotes the stochastic noise, which follows the standard Wiener process.
The function $f:\bbR^n\to \bbR^n$ characterizes the dynamics of the system, and the matrices $B\in\bbR^{n\times m}$ and $\sigma\in \bbR^{n\times n}$ are parameters of the system.

Next, we define an evaluation function for the input $a_i (t)$ from time $0$ to $T$ as
\begin{align}
  \begin{split}\label{eq:general_eval_func}
      J (\{a_i (t); a_{i^-} (t) \}_{t \in [0, T]} )&=  \rev{ \bbE \qty{\int_0^T \left[ \frac{1}{2}a_i(t)^\top R a_i(t) + g\left(x_i(t), m_{i^-}(\cdot,t)\right)\right] \rmd t + v_T(x_i(T)) }}.
    \end{split}
\end{align}
Here, the variable $a_{i ^-}(t)$ is a vector comprising the inputs for all systems except the $i$th system.
The function \rev{$m_{i^-}:\bbR^n\times[0,T] \to \bbR_{\ge0}$} is the empirical density function defined by
\begin{align}\label{eq:density}
  m_{i^-}(x,t) := \frac{1}{N-1}\sum_{j\ne i} \delta(x - x_j(t)),
\end{align}
where $\delta$ denotes the Dirac delta function.
The function \rev{$g:\bbR^n\times \calP(\bbR^n)\to\bbR$} represents the cost value calculated from the interaction of the $i$th system with other systems.
Furthermore, the function $v_T:\bbR^n\to\bbR$ represents the terminal cost.
The matrix $R\in\bbR^{m\times m}$ is positive definite and represents the penalty of the input.

We aim to find a combination of inputs $a_i^*$ that satisfies the following inequality for any $a_i\ \rev{(i=1,\ldots,N)}$:
\begin{align}\label{eq:nash_equilibrium}
  J\qty(\{a_i^*(t);a_{i^-}^*(t)\}_{t\in[0,T]}) \le  J\qty(\{a_i(t);a_{i^-}^*(t)\}_{t\in[0,T]}).
\end{align}
Such a set of inputs $a_i^* \ (i = 1,\ldots,N) $ is called a \emph{Nash equilibrium}.
\rev{Obtaining the Nash equilibrium requires solving a differential game in which Eqs.~\eqref{eq:general_dynamics} and \eqref{eq:general_eval_func} are coupled, which becomes increasingly difficult as the number of systems $N$ increases.}

On the other hand, when $N$ is infinitely large, $m_{i^-}(x,t)$ is approximated by a continuous density function $m(x,t)$.
Assuming that $m$ is known, the solution to the control problem is known to be characterized by the Hamilton-Jacobi-Bellman equation (HJB equation)~\cite{Fleming2006Controlled} taking the form
\begin{align}
  \begin{split}    
    - \partial_t v(x ,t) &= \partial_x v(x,t)^\top  f(x) + \rev{g(x,m(\cdot,t))}
     -\frac{1}{2} \partial_x v(x,t)^\top B R^{-1}B^\top \partial_x v(x,t) + \Tr \{\nu \partial_{xx}v(x,t)\},
  \end{split}\\
  v(x,T) &= v_T(x),
\end{align}
where the function $v:\bbR^n\times\bbR\to\bbR$ is called a \emph{value function}, and the variable $\nu$ is defined as $\nu\coloneqq \nicefrac{1}{2}\ \sigma^\top\sigma$.
Next, assuming that the control input $a^*$ is known and the density function at the initial time of the system is given by $m_0$, the time evolution of the density function is described by the Fokker-Planck equation (FP equation)~\cite{Gardiner2009Stochastic} as follows:
\begin{align}
  \partial_t m(x,t) &= - \partial_x \cdot\left[ \qty{ f(x) -B R^{-1}B^\top \partial_x v(x,t)  }m(x,t)\right] + \Tr \{\nu \partial_{xx}m(x,t)\},\\
  m(x,0)&=m_0(x).
\end{align}
Consequently, our target MFG equations are obtained as the coupling of the HJB and FP equations:
\begin{align}
  \begin{split}
    - \partial_t v(x ,t) &= \partial_x v(x,t)^\top  f(x) + \rev{g(x,m(\cdot,t))}\\
    &\quad -\frac{1}{2} \partial_x v(x,t)^\top B R^{-1}B^\top \partial_x v(x,t)\\
    &\quad + \Tr \{\nu \partial_{xx}v(x,t)\}
  \end{split}\qquad\qquad (x,t)\in\Omega\times [0,T],\label{eq:LQHJB}\\
  \begin{split}
    \partial_t m(x,t) &= - \partial_x \cdot\left[ \qty{ f(x) -B R^{-1}B^\top \partial_x v(x,t)  }m(x,t)\right]\\
    &\quad + \Tr \{\nu \partial_{xx}m(x,t)\}
  \end{split}\qquad (x,t)\in\Omega\times [0,T],\label{eq:LQFP}\\
  v(x,T) &= v_T(x)\qquad x\in\Omega,\label{eq:LQHJB_terminal}\\
  m(x,0)&=m_0(x)\qquad x\in\Omega,\label{eq:LQFP_initial}
\end{align}
where the set $\Omega$ is the spatial domain in $\mathbb{R}^n$.
We consider \eqref{eq:LQHJB}--\eqref{eq:LQFP_initial} under suitable boundary conditions, although we do not state them explicitly here.
Once a solution of \eqref{eq:LQHJB}--\eqref{eq:LQFP_initial} is deduced, the function
\begin{align}
  a^*(x,t) = -R^{-1}B^\top \partial_x v(x,t),
\end{align}
gives an approximate Nash equilibrium in \eqref{eq:nash_equilibrium} (see Ref.~\cite{Lasry2007Mean}).

\rev{We aim to construct a finite difference scheme for approximating} \eqref{eq:LQHJB}--\eqref{eq:LQFP_initial}.
Since the system is not an initial value problem, producing the solution, say the MFG equilibrium, presents a serious difficulty. 
The well-known approach is a \rev{fixed-point iteration} (see Ref.~\cite{Lauriere2021Numerical}, as an example).
In this approach, we generate alternately a sequence of solutions $v^{(k)}$ and $m^{(k)}$ of the HJB and FP equations and are able to solve the HJB and FP equations separately. 
Convergence properties are studied in \rev{Refs.~\cite{Carlini2014Fully,Gueant2012Mean}}, for example. 
However, some stability issues still remain (see Ref.~\cite{Lauriere2021Numerical}).   

In this paper, we study another iterative approach, called \emph{fictitious play}, as proposed in Ref.~\cite{Cardaliaguet2017Learninga}.
We apply the iterative scheme of fictitious play to the MFG equations \eqref{eq:LQHJB}--\eqref{eq:LQFP_initial}. We start with the initial guess $m^{(0)}=m^{(0)}(x,t)$. Then, for $k=0,1,\ldots$, we solve successively the following equations:
\begin{align}
  \begin{split}
    - \partial_t v^{(k+1)}(x ,t) &= \partial_x v^{(k+1)}(x,t)^\top  f(x) + \rev{g(x,\bar m^{(k)}(\cdot,t))} \\
    &\quad -\frac{1}{2} \partial_x v^{(k+1)}(x,t)^\top B R^{-1}B^\top \partial_x v^{(k+1)}(x,t)\\
    &\quad  + \Tr \{\nu \partial_{xx}v^{(k+1)}(x,t)\}
  \end{split}\qquad\qquad (x,t)\in\Omega\times [0,T],\label{eq:fp_LQHJB}\\
  \begin{split}    
    \partial_t m^{(k+1)}(x,t) &= - \partial_x \cdot\left[ \qty{ f(x) -B R^{-1}B^\top \partial_x v^{(k+1)}(x,t)  }m^{(k+1)}(x,t)\right]\\
    &\quad + \Tr \{\nu \partial_{xx}m^{(k+1)}(x,t)\}
  \end{split}\qquad(x,t)\in\Omega\times [0,T],\label{eq:fp_LQFP}\\
  v^{(k+1)}(x,T) &= v_T(x)\qquad x\in\Omega,\label{eq:fp_LQHJB_terminal}\\
  m^{(k+1)}(x,0)&=m_0(x)\qquad x\in\Omega,\label{eq:fp_LQFP_initial}
\end{align}
where $\bar m^{(k)}$ is defined as
\begin{align}\label{eq:m_bar_def}
  \bar m^{(k)}(x,t) &= \frac{1}{k+1}\sum_{\ell=0}^k m^{(\ell)}(x,t).
\end{align}

\begin{remark}
  Fictitious play helps obtain convergence when fixed-point iterations fail to converge (see Refs.~\cite{Lauriere2021Numerical, Perrin2020Fictitious}).
  Furthermore, the solutions of the fictitious play are not merely approximations of the MFG equations; they provide a learning process until arriving at equilibrium using the developed histories.
  Note that using the average of the entire history of the variable may slow down convergence.
  \rerev{
    For overcoming this, a generalization of fictitious play has been proposed that uses an average value of the variables over the past history with non-uniform weights, rather than an average value with uniform weights~\cite{Lavigne2022Generalized}. 
    In this paper, we focus on the standard fictitious play and conduct a convergence analysis of the algorithm.
    The analysis for the generalized algorithm is our future work.
  }
\end{remark}

The convergence properties of fictitious play are well studied in Ref.~\cite{Cardaliaguet2017Learninga} for a general class of MFGs. 
To apply the results, we formulate the MFG equations more precisely. 
First, we consider the equations in the $n$-dimensional cube $[-1,1]^n$ with the periodic boundary conditions. 
That is, we set 
\begin{equation}
 \Omega = \mathbb{R}^n/ (2\mathbb{Z})^n.
\end{equation}
Moreover, $T$ is a given positive constant. 
For the given functions $m_0$, $v_T$, $f$, and $g$, we make the following assumption.

\begin{assumption}\label{assump:PDE-existence-uniform-fp-convergence}
  \begin{enumerate}
    \item $m_0\in \calC^1(\Omega)\cap \calP(\Omega)$.
    \item $v_T\in \calC^3(\Omega)$.
    \item $f\in \calC^1(\Omega)$ and \rev{$B$ has full column rank}.
    \item 
    $m\mapsto g(\cdot,m)$ is Lipschitz continuous from $\calP(\Omega)$ to $\calC^2(\Omega)$:
    \begin{align}
      \|g(\cdot,m)-g(\cdot,m')\|_{\calC^2(\Omega)} \le L_g\bfd_1(m,m').
    \end{align}
    where $L_g$ is the Lipschitz constant.
    \item $g$ is monotone: 
    \begin{align}
      \int_{\Omega}\left(g(x, m)-g\left(x, m^{\prime}\right)\right) \left( m(x)-m^{\prime}(x)\right) \rmd x \geq 0.
    \end{align}
    In addition, there exists a function $G:\calP(\Omega)\to\bbR$ which satisfies
    \begin{align}
      \frac{\delta G}{\delta m} (x,m) = g(x,m).
    \end{align}      
    Finally, $g$ is uniformly bounded on $\Omega\times \calP(\Omega)$; we denote $\|g\|_{\infty} = \sup_{(x,m)\in \Omega\times\calP(\Omega)}|g(x,m)|< \infty$.
  \end{enumerate}
\end{assumption}

\begin{proposition}\label{prop:PDE-existence-uniform-fp-convergence}
  Suppose that Assumption \ref{assump:PDE-existence-uniform-fp-convergence} is satisfied.
  Then there exists a unique solution $\qty(v^{(k)}, m^{(k)})$ for \eqref{eq:fp_LQHJB}--\eqref{eq:fp_LQFP_initial} and a unique solution $(v, m)$ for \eqref{eq:LQHJB}--\eqref{eq:LQFP_initial} in the viscosity sense for the HJB equation and in the distribution sense for the FP equation.
  Moreover, those solutions are continuous in $\Omega\times [0,T]$ and
  \begin{equation}
    \lim_{k\to\infty}v^{(k)}= v,\quad 
    \lim_{k\to\infty}m^{(k)}= m,
  \end{equation}
  uniformly in $\Omega\times [0,T]$.
\end{proposition}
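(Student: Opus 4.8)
The plan is to verify that the MFG system \eqref{eq:LQHJB}--\eqref{eq:LQFP_initial} together with its fictitious-play iteration \eqref{eq:fp_LQHJB}--\eqref{eq:fp_LQFP_initial} falls within the framework of Ref.~\cite{Cardaliaguet2017Learninga}, and then to quote the existence, uniqueness, and convergence results established there. First I would recast the HJB equation into the standard form $-\partial_t v - \Tr\{\nu\partial_{xx}v\} + H(x,\partial_x v) = g(x,m)$ by collecting the first-order and quadratic terms into the Hamiltonian
\begin{equation}
  H(x,p) = \frac12 p^\top B R^{-1} B^\top p - f(x)^\top p.
\end{equation}
Since $R$ is positive definite, $B R^{-1} B^\top$ is positive semidefinite, so $H(x,\cdot)$ is convex; moreover $f\in\calC^1(\Omega)$ makes $H$ jointly $\calC^1$ with the regularity required in Ref.~\cite{Cardaliaguet2017Learninga}. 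The running cost enters as a \emph{local} coupling $g(x,m(x,t))$, and the drift of the FP equation is precisely $-D_pH(x,\partial_x v) = f(x) - B R^{-1} B^\top \partial_x v$, so the two equations are genuinely adjoint in the sense used there.

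Next I would check the structural hypotheses that drive the convergence. The condition $\partial_m g(x,m)\ge 0$ in Assumption \ref{assump:PDE-existence-uniform-fp-convergence} says $g(x,\cdot)$ is nondecreasing, which for a local coupling is exactly the Lasry--Lions monotonicity
\begin{equation}
  \int_\Omega \bigl(g(x,m) - g(x,m')\bigr)(m - m')\,\rmd x \ge 0;
\end{equation}
this yields uniqueness of the MFG equilibrium $(v,m)$. The existence of the primitive $G$ with $g=\partial_m G$ furnishes the potential (variational) structure that Ref.~\cite{Cardaliaguet2017Learninga} exploits to show that the fictitious-play averages $\bar m^{(k)}$ form a minimizing sequence for the associated energy, and hence converge. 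The remaining data regularity, namely $m_0,v_T,f\in\calC^1(\Omega)$ and $g(\cdot,m)\in\calC^2(\Omega)$, together with the diffusion $\nu = \tfrac12\sigma^\top\sigma$ and the compactness of the periodic domain $\Omega = \bbR^n/(2\bbZ)^n$, supplies the uniform a priori bounds and the equicontinuity needed to make all solutions continuous on $\Omega\times[0,T]$.

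With these hypotheses in place, the argument proceeds in three steps. For each fixed $k$, the decoupled equation \eqref{eq:fp_LQHJB} with the known source $g(x,\bar m^{(k)})$ has a unique bounded viscosity solution $v^{(k+1)}$ by standard comparison, and the linear FP equation \eqref{eq:fp_LQFP} with the now-determined drift has a unique distributional solution $m^{(k+1)}$, which establishes well-posedness of the iteration. The MFG system itself is well-posed by the monotone existence and uniqueness theory. Finally, invoking the fictitious-play convergence theorem of Ref.~\cite{Cardaliaguet2017Learninga} gives $v^{(k)}\to v$ and $m^{(k)}\to m$ uniformly on $\Omega\times[0,T]$. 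I expect the main obstacle to be the \emph{advection term} $f(x)^\top p$: it is linear rather than coercive in $p$ and is the one feature absent from the quadratic Hamiltonians treated in earlier works, so one must confirm that the growth and regularity conditions of Ref.~\cite{Cardaliaguet2017Learninga} still hold. In particular, one must check that the possibly degenerate $BR^{-1}B^\top$ (which occurs whenever $B$ has rank less than $n$), combined with the second-order term $\nu$, still delivers the required compactness estimates, and that the drift $f(x)-BR^{-1}B^\top\partial_x v$ remains regular enough for the FP equation to be uniquely solvable.
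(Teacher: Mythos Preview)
Your proposal is correct and follows exactly the approach of the paper, which simply states that the proposition is a direct application of Theorem~2.1 in Ref.~\cite{Cardaliaguet2017Learninga}. You have merely elaborated the hypothesis-checking that the paper leaves implicit, and the cautionary remarks you raise about the advection term and the possibly degenerate $BR^{-1}B^\top$ are reasonable points of care but do not alter the strategy.
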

\begin{proof}
  This is a direct application of Theorem 2.1 in Ref.~\cite{Cardaliaguet2017Learninga}.
\end{proof}
\begin{remark}
  \rev{In \cite{Cardaliaguet2010Notes}, it is shown that the solution $(m, v)$ of the MFGs are of class $\calC^{2+\eta}$ for the first variable and class $\calC^{1+\eta/2}$ for the second variable, respectively, where $\eta\in(0,1/2)$.}
\end{remark}

\section{Main results}\label{sec:result}

In the preceding section, we mentioned our target MFG equations \eqref{eq:LQHJB}--\eqref{eq:LQFP_initial} and the associated fictitious play iterative scheme \eqref{eq:fp_LQHJB}--\eqref{eq:fp_LQFP_initial}.
In this section, we present the novel contributions of our work. 
Assuming further a relationship between the coefficients $R$, $B$, and $\nu$, and applying a variable transformation, we derive an alternate formulation of the MFG equations consisting solely of linear equations. 
Consequently, we can solve the MFG equations by solving only linear equations. 
For this reason, we call the resulting equations \emph{linearly solvable mean field games (LS-MFGs)} following Ref.~\cite{Todorov2007LinearlySolvable}.

\subsection{Derivation of LS-MFGs}\label{sec:LSMFG}

Hereinafter, we will suppose that the following assumption is satisfied. 
\begin{assumption}\label{assump:LHJB}
  There exists a positive constant $\lambda$ satisfying
  \begin{align}\label{eq:assump-LHJB}
    \lambda BR^{-1}B^\top=\nu.
  \end{align}
\end{assumption}

The assumption in \eqref{eq:assump-LHJB} implies that there is a proportional relationship between the noise and the input.
Such a property is known to hold for several real-world dynamical systems~\cite{Kappen2005Linear}.

Suppose we are given solution $(v,m)$ of the MFG equations \eqref{eq:LQHJB}--\eqref{eq:LQFP}.
We then introduce new variables 
$\psi$ and $\tilde{\psi}$ using the \emph{Cole-Hopf transformation} as
 \begin{align}
    \psi(x,t)&=\exp(-\frac{v(x,t)}{\lambda}),\label{eq:cole-hopf}\\
    \tilde\psi(x,t)&=\frac{m(x,t)}{\psi(x,t)}\label{eq:cole-hopf2}.
  \end{align}
Under Assumption \ref{assump:LHJB}, the third and fourth terms on the right-hand side of \eqref{eq:LQHJB} are calculated as 
  \begin{align}
    \begin{split}      
      &\quad -\frac{1}{2}  \partial_x v^\top B R^{-1}B^\top \partial_x v + \Tr \{\nu \partial_{xx}v\}\\
      &= 
      -\frac{\lambda^2}{2\psi^2} \partial_x\psi^\top BR^{-1}B^\top\partial_x\psi 
      + \frac{\lambda}{2\psi^2} \partial_x\psi^\top\nu\partial_x\psi - \frac{\lambda}{2\psi}\Tr\{\nu \partial_{xx}\psi\}\\
      & = - \frac{\lambda}{2\psi}\Tr\{\nu \partial_{xx}\psi\}.
    \end{split}\label{eq:L-HJB_1}
  \end{align}
  from which we obtain \eqref{eq:ADR-non-conservative}.
A similar calculation for $\tilde{\psi}$ is also possible. 

Consequently, we find that $(\psi,\tilde{\psi})$ solves the following equations: 
\begin{align}
    \begin{split}\label{eq:ADR-non-conservative}
      \partial_t \psi(x ,t) &= -  f(x)^\top\partial_x \psi(x,t) - \Tr \{\nu \partial_{xx}\psi(x,t)\} + \frac{1}{\lambda}\rev{g(x,m(\cdot,t))}\psi(x,t)
    \end{split}\qquad (x,t)\in\Omega\times [0,T],\\
    \begin{split}\label{eq:ADR-conservative}
      \partial_t \tilde \psi(x ,t) &= - \partial_x\cdot( f(x) \tilde \psi(x,t)) + \Tr \{\nu \partial_{xx}\tilde \psi(x,t)\} - \frac{1}{\lambda}\rev{g(x,m(\cdot,t))} \tilde \psi(x,t)
    \end{split}\qquad (x,t)\in\Omega\times [0,T], \\
    \psi(x,T) &= \exp(-\frac{v_T(x)}{\lambda}) \qquad x\in\Omega,\\
    \tilde\psi(x,0) &= \frac{m_0(x)}{\psi(x,0)}\qquad x\in\Omega.\label{eq:ADR-non-conservative_initial}
  \end{align}
  where $m$ is given as
  \begin{align}
    m(x,t) = \psi(x,t)\tilde \psi(x,t).
  \end{align}
The converse is also true. That is, we have the following proposition. 

\begin{proposition}\label{prop:lsmfg_derivation}
  Suppose that Assumption \ref{assump:LHJB} holds. 
  If a smooth solution $(v,m)$ of the MFG equations \eqref{eq:LQHJB}--\eqref{eq:LQFP_initial} is given, then $(\psi,\tilde{\psi})$ defined by \eqref{eq:cole-hopf} and \eqref{eq:cole-hopf2} solves the system \eqref{eq:ADR-non-conservative}--\eqref{eq:ADR-non-conservative_initial}. 
  Conversely, if a smooth solution $(\psi,\tilde\psi)$ of \eqref{eq:ADR-non-conservative}--\eqref{eq:ADR-non-conservative_initial} is given, then $(v,m)\coloneqq(-\lambda \ln \psi, \psi\tilde\psi)$ solves the MFG equations \eqref{eq:LQHJB}--\eqref{eq:LQFP_initial}. 
\end{proposition}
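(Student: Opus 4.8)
The plan is to prove the equivalence by direct substitution, handling the forward implication first and then observing that the converse is essentially the same chain of identities read in reverse, plus a positivity argument. Throughout I work with classical (smooth) solutions, as the statement assumes, so all derivatives and the maximum principle are available.

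For the forward direction I would begin from $\psi=\exp(-v/\lambda)$ and record the derivative identities $\partial_t\psi=-\tfrac1\lambda(\partial_t v)\psi$, $\partial_x v=-\lambda\,\partial_x\psi/\psi$, and $\partial_{xx}\psi=-\tfrac1\lambda(\partial_{xx}v)\psi+\tfrac1{\lambda^2}(\partial_x v)(\partial_x v)^\top\psi$. Substituting these into the HJB equation \eqref{eq:LQHJB}, the advection term $(\partial_x v)^\top f$ and the coupling $g$ transform linearly into $-f^\top\partial_x\psi$ and $\tfrac1\lambda g\psi$, reproducing two of the three terms of \eqref{eq:ADR-non-conservative} at once. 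The only nontrivial point is that the quadratic gradient term $-\tfrac12(\partial_x v)^\top BR^{-1}B^\top\partial_x v$ combines with the logarithmic part of $\Tr\{\nu\partial_{xx}v\}$; Assumption \ref{assump:LHJB} is precisely what forces these two contributions to cancel, so that only the linear diffusion term of \eqref{eq:ADR-non-conservative} survives. This cancellation is exactly the computation recorded in \eqref{eq:L-HJB_1}, which I would simply invoke for the $\psi$-equation.

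For the companion equation I would set $m=\psi\tilde\psi$ and first rewrite the Fokker--Planck drift: using $\partial_x v=-\lambda\,\partial_x\psi/\psi$ together with Assumption \ref{assump:LHJB}, the term $-BR^{-1}B^\top\partial_x v$ becomes $\nu\,\partial_x\psi/\psi$, so the effective velocity in \eqref{eq:LQFP} is $f+\nu\,\partial_x\psi/\psi$. I would then expand $\partial_t(\psi\tilde\psi)$ by the product rule, use the already-established $\psi$-equation to eliminate $\partial_t\psi$, and expand both the conservative flux $\partial_x\cdot[(f+\nu\,\partial_x\psi/\psi)\psi\tilde\psi]$ and $\Tr\{\nu\partial_{xx}(\psi\tilde\psi)\}$. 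After collecting terms, the mixed gradient contributions — the cross terms $(\partial_x\psi)^\top\nu\,\partial_x\tilde\psi$ and the $(\partial_x\psi)^\top\nu\,\partial_x\psi$ pieces — cancel against one another, leaving precisely the conservative equation \eqref{eq:ADR-conservative} for $\tilde\psi$. The terminal and initial data transform by pointwise evaluation, giving the conditions through \eqref{eq:ADR-non-conservative_initial}, and periodicity is preserved because the transformation is a smooth pointwise map. The bookkeeping in this divergence-form computation is the most laborious step, but it is mechanical.

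For the converse, given a smooth solution $(\psi,\tilde\psi)$ of \eqref{eq:ADR-non-conservative}--\eqref{eq:ADR-non-conservative_initial}, I set $v\coloneqq-\lambda\ln\psi$ and $m\coloneqq\psi\tilde\psi$. Since every substitution above is an identity rather than a one-way implication, the MFG equations \eqref{eq:LQHJB}--\eqref{eq:LQFP_initial} follow by reading the same equalities in the opposite direction, with no new algebra. The one genuinely new ingredient — and the step I expect to be the main obstacle — is ensuring that $v=-\lambda\ln\psi$ is well defined, i.e.\ that $\psi$ is strictly positive on $\Omega\times[0,T]$. Equation \eqref{eq:ADR-non-conservative} is a linear backward advection--diffusion--reaction equation whose terminal data $\exp(-v_T/\lambda)$ is strictly positive and whose reaction coefficient $g/\lambda$ is bounded (and nonnegative by Assumption \ref{assump:PDE-existence-uniform-fp-convergence}); passing to the time-reversed forward parabolic equation, the classical maximum principle yields $\psi>0$ throughout. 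I would isolate this positivity as a short lemma, after which the substitution closes the argument and completes the equivalence.
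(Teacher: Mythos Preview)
Your proposal is correct and follows essentially the same route as the paper: the paper's argument is precisely the direct-substitution computation culminating in \eqref{eq:L-HJB_1}, with the $\tilde\psi$ equation dismissed as ``a similar calculation'' and the converse asserted without further comment. Your treatment is in fact more complete than the paper's, since you explicitly isolate the strict positivity of $\psi$ needed to make $v=-\lambda\ln\psi$ well defined in the converse direction---a point the paper leaves implicit.
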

We call the transformed system \eqref{eq:ADR-non-conservative}--\eqref{eq:ADR-non-conservative_initial} the \emph{linearly solvable mean field game (LS-MFG)} equations.

\begin{remark}
  The Cole-Hopf transformation has a long history.
  It is applied to \rev{convert HJB equations to linear equations}
  in Refs.~\cite{Kappen2005Linear, Kappen2005Path}.
  The LS-MFG equations for the simpler MFG equations are derived in Refs.~\cite{Gueant2012Mean, Swiecicki2016Schr}; however, these studies do not consider the case in which function $f(x)$ exists in the dynamics \eqref{eq:general_dynamics}.
  Equations~\eqref{eq:ADR-non-conservative}--\eqref{eq:ADR-non-conservative_initial} are extensions of the systems in Refs.~\cite{Gueant2012Mean, Swiecicki2016Schr}, which better describe real-world control problems.
  \rev{Note that the result of the Cole-Hopf transformation has an adjoint structure, which allows discretization with the same structure for the two equations.}
\end{remark}

\begin{proposition}\label{prop:CH-backward-bound}
  \rerev{Suppose that Assumption \ref{assump:PDE-existence-uniform-fp-convergence}-(1)--(4) and Assumption \ref{assump:LHJB} are satisfied.}
  There exist constants $\gamma_{\min}>0$ and $\gamma_{\max}>0$ such that for any $m\in\calP(\Omega)$, the solution $\psi$ for
  \begin{align}
    \begin{split}\label{eq:CH-backward}
      \partial_t \psi(x ,t) &= -  f(x)^\top\partial_x \psi(x,t) - \Tr \{\nu \partial_{xx}\psi(x,t)\} + \frac{1}{\lambda}\rev{g(x,m(\cdot,t))}\psi(x,t)
    \end{split}\qquad (x,t)\in\Omega\times [0,T],\\
    \psi(x,T) &= \exp(-\frac{v_T(x)}{\lambda}) \qquad x\in\Omega,\label{eq:CH-backward-0}
  \end{align}
  satisfies
  \begin{align}\label{eq:CH-backward-bound}
    0<\gamma_{\min} &\le \psi(x,t) \le\gamma_{\max}\quad \forall (x,t)\in\Omega\times[0,T].
  \end{align}
\end{proposition}
\begin{proof}
  See Appendix.    
\end{proof}

By performing the same transformation on \eqref{eq:fp_LQHJB}--\eqref{eq:fp_LQFP_initial}, the fictitious play for the LS-MFG is derived as
\begin{align}
  \begin{split}\label{eq:ADR-fp-non-conservative}
    \partial_t \psi^{(k+1)}(x ,t) &= -  f(x)^\top\partial_x \psi^{(k+1)}(x,t) - \Tr \{\nu \partial_{xx}\psi^{(k+1)}(x,t)\} \\
    &\quad + \frac{1}{\lambda}\rev{g(x,\bar m^{(k)}(\cdot,t))}\psi^{(k+1)}(x,t)
  \end{split}\qquad\quad (x,t)\in\Omega\times [0,T],\\
  \begin{split}\label{eq:ADR-fp-conservative}
    \partial_t \tilde \psi^{(k+1)}(x ,t) &= - \partial_x\cdot( f(x) \tilde \psi^{(k+1)}(x,t)) + \Tr \{\nu \partial_{xx}\tilde \psi^{(k+1)}(x,t) \}\\
    &\quad - \frac{1}{\lambda}\rev{g(x,\bar m^{(k)}(\cdot,t))} \tilde \psi^{(k+1)}(x,t)
  \end{split}\qquad\quad (x,t)\in\Omega\times [0,T],\\
  \psi^{(k)}(x,T) &= \exp(-\frac{v_T(x)}{\lambda})\qquad x\in\Omega,\label{eq:ADR-fp-non-conservative_terminal}\\
  \tilde\psi^{(k)}(x,0) &= \frac{m_0(x)}{\psi^{(k)}(x,0)}\qquad x\in\Omega,\label{eq:ADR-fp-conservative_initial}
\end{align}
where $\bar m^{(k)}(x,t)$ denotes
\begin{align}
  \bar m^{(k)}(x,t) &= \frac{1}{k+1}\sum_{\ell=0}^k m^{(\ell)}(x,t),\\
  m^{(k)}(x,t) &= \psi^{(k)}(x,t)\tilde \psi^{(k)}(x,t).\label{eq:def_m^{(k)}}
\end{align}

\begin{remark}
  \rev{
    The result obtained in Propositions \ref{prop:lsmfg_derivation} and \ref{prop:CH-backward-bound} are valid for the iterative MFGs.
    First, for a fixed $k\in\bbN$, the solution $(v^{(k)},m^{(k)})$ for \eqref{eq:fp_LQHJB}--\eqref{eq:m_bar_def} and the solution $(\psi^{(k)}, \tilde\psi^{(k)})$ for \eqref{eq:ADR-fp-non-conservative}--\eqref{eq:def_m^{(k)}} satisfy $(v^{(k)},m^{(k)})=(-\lambda \ln \psi^{(k)}, \psi^{(k)}\tilde\psi^{(k)})$.
    In addition, the boundedness for $\psi^{(k)}$ is given as
    \begin{align}\label{eq:psi_bound}
      0<\gamma_{\min} &\le \psi^{(k)}(x,t) \le\gamma_{\max} \quad \forall (x,t)\in\Omega\times[0,T],
    \end{align}
    which is obtained by the same proof as in Proposition \ref{prop:CH-backward-bound}.
  }
\end{remark}

\subsection{Proposed numerical scheme}\label{sec:scheme}

We propose a numerical scheme for the LS-MFG equations \eqref{eq:ADR-fp-non-conservative}--\eqref{eq:def_m^{(k)}}.
Recall that we are working on $\Omega = \bbR^n/(2\bbZ)^n$.
We start by discretizing the space variable $x\in \Omega$.
For each coordinate $x_l$, where $x_l\ (l=1,\ldots,n)$ denotes the $l$th component of $x$, we define a partition number $N_{x_l}>0$ and $\Delta x_l \coloneqq 1/N_{x_l}$, from which we define grid points $x_{i_l}\coloneqq i_l \Delta x_l\ (i_l = 0, \pm1, \ldots, \pm N_{x_l})$.
For the time variable $t\in[0,T]$, we define a partition number $N_t>0$ and $\Delta t\coloneqq T/N_t$, from which we define grid points $t_j \coloneqq j\Delta t\ (j=0,\ldots,N_t)$.
Unless otherwise noted, we use the notation $i_l \equiv 0, \pm1, \ldots, \pm N_{x_l}\ (\text{mod}\  2N_{x_l} + 1)$ and $j\equiv 0,\ldots, N_t$.
We vectorize $i_l$ and $x_{i_l}$ as $\bfi = (i_1,i_2,\ldots, i_n)$ and $x_\bfi = [x_{i_1}, x_{i_2}, \ldots, x_{i_n}]^\top$.

For each grid point $(\bfi,j)$, the approximate values of $\psi(x_\bfi,t_j)$, $\tilde\psi(x_\bfi,t_j)$, and $m(x_\bfi,t_j)$ are denoted by $\Psi_{\bfi,j}$, $\tilde\Psi_{\bfi,j}$, and $M_{\bfi,j}$, respectively.
The $l$th component of $f(x_\bfi)$ is denoted by $f_{l}(x_\bfi)$.
The difference quotients of the variables are defined as follows:
\begin{align}
  \begin{split}
    D(\Psi, \bfi,j) &= \sum_{l=1}^n\left( \frac{1+\sgn f_{l}(x_\bfi)}{2}f_{l}(x_\bfi) \frac{\Psi_{\bfi[{l^+}],j}-\Psi_{\bfi,j}}{\Delta x_l}\right.\\
     &\qquad\qquad + \left.\frac{1-\sgn f_{l}(x_\bfi)}{2}f_{l}(x_\bfi) \frac{\Psi_{\bfi,j}-\Psi_{\bfi[{l^-}],j}}{\Delta x_l}\right),
  \end{split}\\
  \begin{split}
    \tilD(\tilde\Psi, \bfi,j) &=\sum_{l=1}^n\left( -\frac{1+\sgn f_{l}(x_\bfi)}{2} \frac{f_{l}(x_\bfi)\tilde\Psi_{\bfi,j} - f_{l}(x_{\bfi[{l^-}]})\tilde\Psi_{\bfi[{l^-}],j}}{\Delta x_l}\right.\\
     &\qquad\qquad- \left.\frac{1-\sgn f_{l}(x_\bfi)}{2}  \frac{f_{l}(x_{\bfi[{l^+}]})\tilde\Psi_{\bfi[{l^+}],j} - f_{l}(x_\bfi)\tilde\Psi_{\bfi,j}}{\Delta x_l}\right),
  \end{split}\\
% \end{align}
% \begin{align}
  D_2(\Psi,\bfi,j) &=\sum_{l_1, l_2=1}^n\left( \nu_{l_1,l_2} \frac{\Psi_{\bfi[{l_1^+l_2^+}],j} - \Psi_{\bfi[{l_1^+l_2^-}],j} - \Psi_{\bfi[{l_1^-l_2^+}],j} + \Psi_{\bfi[{l_1^-l_2^-}],j}}{\Delta x_{l_1}\Delta x_{l_2}}\right),\\
  \tilD_2(\tilde\Psi,\bfi,j) &=\sum_{l_1, l_2=1}^n\left( \nu_{l_1,l_2} \frac{\tilde\Psi_{\bfi[{l_1^+l_2^+}],j} - \tilde\Psi_{\bfi[{l_1^+l_2^-}],j} - \tilde\Psi_{\bfi[{l_1^-l_2^+}],j} + \tilde\Psi_{\bfi[{l_1^-l_2^-}],j}}{\Delta x_{l_1}\Delta x_{l_2}}\right),
\end{align}
where we have set 
\begin{align}
  \bfi[{l^+}] &\coloneqq \left(i_{1}, i_{2}, \cdots, i_{l-1}, i_{l}+1, \cdots, i_{n}\right),\\
  \bfi[{l^-}] &\coloneqq \left(i_{1}, i_{2}, \cdots, i_{l-1}, i_{l}-1, \cdots, i_{n}\right),
\end{align}
and
\begin{align}
  \bfi[{l_1^+l_2^+}] &\coloneqq 
  \begin{cases}
    \left(i_{1}, i_{2}, \cdots, i_{l_1-1}, i_{l_1}+1, \cdots, i_{l_2-1}, i_{l_2}+1,\cdots,i_{n}\right),& l_1\ne l_2,\\
    \left(i_{1}, i_{2}, \cdots, i_{l_1-1}, i_{l_1}+1, \cdots,i_{n}\right), & l_1= l_2,
  \end{cases}\\
  \bfi[{l_1^-l_2^-}] &\coloneqq 
  \begin{cases}
    \left(i_{1}, i_{2}, \cdots, i_{l_1-1}, i_{l_1}-1, \cdots, i_{l_2-1}, i_{l_2}-1,\cdots,i_{n}\right),& l_1\ne l_2,\\
    \left(i_{1}, i_{2}, \cdots, i_{l_1-1}, i_{l_1}-1, \cdots,i_{n}\right), & l_1= l_2,
  \end{cases}\\  
  \bfi[{l_1^+l_2^-}] &\coloneqq 
  \begin{cases}
    \left(i_{1}, i_{2}, \cdots, i_{l_1-1}, i_{l_1}+1, \cdots, i_{l_2-1}, i_{l_2}-1,\cdots,i_{n}\right),& l_1\ne l_2,\\
    \left(i_{1}, i_{2}, \cdots,i_{n}\right)=\bfi, & l_1= l_2,
  \end{cases}\\
  \bfi[{l_1^-l_2^+}] &\coloneqq 
  \begin{cases}
    \left(i_{1}, i_{2}, \cdots, i_{l_1-1}, i_{l_1}-1, \cdots, i_{l_2-1}, i_{l_2}+1,\cdots,i_{n}\right),& l_1\ne l_2,\\
    \left(i_{1}, i_{2}, \cdots,i_{n}\right)=\bfi, & l_1= l_2.
  \end{cases}
\end{align}
Using the notation above, we define the finite difference scheme for \eqref{eq:ADR-fp-non-conservative}--\eqref{eq:ADR-fp-conservative_initial} as
\begin{align}
  \begin{split}\label{eq:nd_upwind_ADR}  
    \frac{\Psi^{(k+1)}_{\bfi,j-1}-\Psi^{(k+1)}_{\bfi,j}}{\Delta t} &=  D(\Psi^{(k+1)},\bfi,j) + D_2(\Psi^{(k+1)},\bfi,j)
     -\frac{1}{\lambda} \rev{g\left( x_\bfi, \bar M_{j}^{(k)}\right)}\Psi^{(k+1)}_{\bfi,j-1}, 
  \end{split}\\
  \begin{split}\label{eq:nd_upwind_ADR_conservative}
    \frac{\tilde\Psi^{(k+1)}_{\bfi,j+1}-\tilde\Psi^{(k+1)}_{\bfi,j}}{\Delta t} &= \tilD(\tilde\Psi^{(k+1)},\bfi,j) + \tilD_2(\tilde\Psi^{(k+1)},\bfi,j)
     - \frac{1}{\lambda} \rev{g\left( x_\bfi, \bar M_{j}^{(k)}\right)}\tilde\Psi^{(k+1)}_{\bfi,j+1}, 
  \end{split}\\
  \Psi_{\bfi,N_t}^{(k)} &= \exp(-\frac{v_T(x_\bfi)}{\lambda}),\label{eq:nd_upwind_ADR_terminal}\\
  \tilde\Psi_{\bfi,0}^{(k)} &= \frac{m_0(x_\bfi)}{\Psi_{\bfi,0}^{(k)}},\label{eq:nd_upwind_ADR_conservative_initial}
\end{align}
where we have defined 
\begin{align}
  M_{\bfi,j}^{(k)} &= \tilde \Psi_{\bfi,j}^{(k)}\Psi_{\bfi,j}^{(k)},\label{eq:nd_M}\\
  \rev{M_{j}^{(k)}(x)} &\rev{= M_{\bfi,j}^{(k)} \quad \text{for } x \in [x_{\bfi-\bfone \Delta x/2}, x_{\bfi+\bfone \Delta x/2}) },\\
  \rev{\bar M_{j}^{(k)}(x)} &\rev{= \frac{1}{k+1}\sum_{\ell=0}^k M_{j}^{(k)}(x) }\label{eq:nd_cumurative_M}.
\end{align}

Using the scheme above, we obtain the solution with \rev{Algorithm 1}, where the forward and backward equations are solved repeatedly using fictitious play.
\rev{Here, $M^{(0)}$ must be chosen at the first iteration. We set $M_{\bfi,j}^{(0)}=m^{(0)}(x_\bfi, t_j)$ for all $\bfi$ and $j$.
For the stopping criteria $\epsilon$ for the algorithm, we choose the error of the variables from the previous iteration $\| M^{(k)} - M^{(k-1)} \|_\infty$. 
Note that the choice of the stopping criteria is not unique; for instance, the residuals of the two finite difference equations could also be used. 
We numerically examined the impact of the choice of the criteria, to confirm that it does not affect the results significantly.
}

\begin{algorithm}[t]
  % \SetAlgoLined
  \caption{Numerical Method for the LS-MFG}\label{alg:LSMFG}
  \SetKwInOut{Input}{Input}
  \SetKwInOut{Output}{Output}
  \Input{$\epsilon>0$: Parameter for convergence assessment}
  \Output{$\Psi$, $\tilde\Psi$: Solution for the LSMFG}
  $k\leftarrow 0$, $z\leftarrow +\infty$, $\rev{M_{\bfi,j}^{(0)}\leftarrow m^{(0)}(x_\bfi, t_j)\ \forall \bfi,j}$\\
  \While{$z>\epsilon$}{
    Calculate $\bar M^{(k)}$ with \eqref{eq:nd_cumurative_M}, using $M^{(\ell)}$ for $\ell=0,\ldots, k$.\\
    Solve \eqref{eq:nd_upwind_ADR} backwardly with $\bar M^{(k)}$ and $v_T$, and obtain $\Psi^{(k+1)}$.\\
    Calculate $\tilde\Psi_{\cdot,0}^{(k+1)}$ with \eqref{eq:nd_upwind_ADR_conservative_initial}, using $\Psi_{\cdot,0}^{(k+1)}$ and $m_0$.\\
    Solve \eqref{eq:nd_upwind_ADR_conservative} forwardly with $\bar M^{(k)}$ and $\tilde\Psi_{\cdot,0}^{(k+1)}$, and obtain $\tilde\Psi^{(k+1)}$.\\
    Calculate $M^{(k+1)}$ with \eqref{eq:nd_M}, using $\tilde \Psi^{(k+1)}$ and $\Psi^{(k+1)}$.\\
    $z\leftarrow \| M^{(k)} - M^{(k-1)} \|_\infty$.\\
    $k\leftarrow k+1$.
  }
  $\Psi\leftarrow \Psi^{(k)}$, $\tilde\Psi\leftarrow \tilde\Psi^{(k)}$. 
\end{algorithm}

\begin{remark}
  In Ref.~\cite{Gueant2012Mean}, an implicit scheme is proposed only for one-dimensional systems, whereas we here propose an explicit scheme for multidimensional systems.
  The scheme above uses upwind differencing for the advection term, central differencing for the diffusion term,
  \rev{and implicit scheme for the reaction term.
  Such a discretization is originally proposed in \cite{Mickens1999Nonstandard} to guarantee the positivity of the solution for the reaction-diffusion equation.
  See also \cite{Mickens2000Nonstandard, Appadu2013Numericalc} for details.
  }
  However, it has not yet been applied in the advection-diffusion-reaction equation, nor has the convergence analysis of the schemes been done.
  We prove the convergence properties below.
\end{remark}

\subsection{Convergence properties}\label{sec:converge_result}

We proceed to our convergence analysis.
To avoid unimportant difficulties, we restrict our attention to the case $n=1$, that is, $\Omega = \bbR/(2\bbZ)$.
The proof for the general $n$-dimensional problem is also possible in essentially the same manner.

For a one-dimensional problem,
we discretize the space variable $x$ as $x_i = i\Delta x\ (i=-N_x,\ldots,N_x)$, where we have defined $\Delta x\coloneqq 1/N_x$ with partition numbers $N_x>0$.
We define $f_i \coloneqq  f(x_i)$ and $f_i^{+} \coloneqq \max(f_i, 0)$, $f_i^{-} \coloneqq \min(f_i, 0)$.
Then the difference scheme in \eqref{eq:nd_upwind_ADR}--\eqref{eq:nd_upwind_ADR_conservative_initial} can be rewritten as
\begin{align}
  \begin{split}    
    \frac{\Psi^{(k+1)}_{i,j-1}-\Psi^{(k+1)}_{i,j}}{\Delta t} &= f_i^{+} \frac{\Psi^{(k+1)}_{i+1,j}-\Psi^{(k+1)}_{i,j}}{\Delta x}+f_i^{-} \frac{\Psi^{(k+1)}_{i,j}-\Psi^{(k+1)}_{i-1,j}}{\Delta x}\\
    &\quad + \nu\frac{\Psi^{(k+1)}_{i+1,j}-2\Psi^{(k+1)}_{i,j}+\Psi^{(k+1)}_{i+1,j}}{\Delta x^2}
     - \frac{1}{\lambda} \rev{g\left(x_i, \bar M_{j}^{(k)}\right)}\Psi^{(k+1)}_{i,j-1}, 
  \end{split}\label{eq:upwind_ADR}\\
  \begin{split}    
    \frac{\tilde\Psi^{(k+1)}_{i,j+1}-\tilde\Psi^{(k+1)}_{i,j}}{\Delta t} &= -\frac{1+\sgn f_i}{2} \frac{f_{i}\tilde\Psi^{(k+1)}_{i,j} - f_{i-1}\tilde\Psi^{(k+1)}_{i-1,j}}{\Delta x}
     - \frac{1-\sgn f_i}{2}  \frac{f_{i+1}\tilde\Psi^{(k+1)}_{i+1,j} - f_{i}\tilde\Psi^{(k+1)}_{i,j}}{\Delta x}\\
    &\quad + \nu\frac{\tilde\Psi^{(k+1)}_{i+1,j}-2\tilde\Psi^{(k+1)}_{i,j}+\tilde\Psi^{(k+1)}_{i+1,j}}{\Delta x^2}
     - \frac{1}{\lambda} \rev{g\left(x_i, \bar M_{j}^{(k)}\right)}\tilde\Psi^{(k+1)}_{i,j+1}, 
  \end{split}\label{eq:upwind_ADR_conservative}
\end{align}
\begin{align}
  \Psi_{i,N_t}^{(k)} &= \exp(-\frac{v_T(x_i)}{\lambda}),\label{eq:upwind_ADR_terminal}\\
  \tilde\Psi_{i,0}^{(k)} &= \frac{m_0(x_i)}{\Psi_{i,0}^{(k)}}.\label{eq:upwind_ADR_conservative_initial}
\end{align}
To conduct the convergence analysis, we assume the following.
\begin{assumption}\label{assmp:convergence_analysis}
  \begin{enumerate}
    \item For \eqref{eq:ADR-fp-non-conservative}--\eqref{eq:ADR-fp-conservative_initial}, suppose that the solutions $\psi^{(k)}(x,t)$ and $\tilde\psi^{(k)}(x,t)$ are of class \rev{$\calC^{2+\eta}$} for the first variable and class $\rev{\calC^{1+\frac{\eta}{2}}}$ for the second variable, respectively, \rev{where $\eta\in(0,1]$}.
    \item The integers $N_x$ and $N_t$ are sufficiently large so that $\Delta x\le 1$ and $\Delta t \le 1$.
    In addition, the Courant-Friedrichs-Lewy condition (CFL condition) is satisfied:
    \begin{align}\label{eq:CFL}
      \alpha\sup_{x\in\Omega} |f(x)| + 2\beta \nu  < 1,
    \end{align}
    where we have defined 
    \begin{align}
      \alpha&\coloneqq\frac{\Delta t}{\Delta x},\\
      \beta&\coloneqq \frac{\Delta t}{\Delta x^2}.
    \end{align}
  \end{enumerate}
\end{assumption}
The main convergence results are provided below.
\begin{theorem}\label{thm:convergence-repeat}
  Let $(\psi^{(k)}, \tilde\psi^{(k)})$ be solutions of the fictitious play for the LS-MFG in \eqref{eq:ADR-fp-non-conservative}--\eqref{eq:def_m^{(k)}} and $(\Psi^{(k)}, \tilde\Psi^{(k)})$ be solutions of the finite difference scheme in \eqref{eq:upwind_ADR}--\eqref{eq:upwind_ADR_conservative_initial}.  
  \rerev{Suppose that Assumption \ref{assump:PDE-existence-uniform-fp-convergence}-(1)--(4), \ref{assump:LHJB}, \ref{assmp:convergence_analysis} are satisfied.}
  Then, there exist strictly increasing positive functions $K(k)$ and $\tilK(k)$ of $k$ such that
  \begin{align}
    &\sup_{i,j} \left|\psi^{(k)}(x_i,t_j)-\Psi_{i,j}^{(k)}\right| \le K(k)\rev{\qty((\Delta x)^\eta + (\Delta t)^\frac{\eta}{2})}\label{eq:psi_convergence},\\
    &\sup_{i,j} \left|\tilde \psi^{(k)}(x_i,t_j)-\tilde \Psi_{i,j}^{(k)}\right| \le \tilde K(k)\rev{\qty((\Delta x)^\eta + (\Delta t)^\frac{\eta}{2})}\label{eq:tilpsi_convergence}.
  \end{align}
\end{theorem}

\begin{theorem}\label{thm:convergence-repeat-limit}
  Let $(m, v)$ be solutions of the MFG in \eqref{eq:LQHJB}--\eqref{eq:LQFP_initial} and $(\Psi^{(k)}, \tilde\Psi^{(k)})$ be solutions of the finite difference scheme in \eqref{eq:upwind_ADR}--\eqref{eq:upwind_ADR_conservative_initial}.
  Suppose that Assumptions \ref{assump:PDE-existence-uniform-fp-convergence}, \ref{assump:LHJB}, \ref{assmp:convergence_analysis} are satisfied.
  Then, the following convergence properties hold:
  \begin{align}
    \lim_{k\to\infty}
    \lim_{\rev{\Delta t, \Delta x\to 0}}
    \sup_{i,j} \left|m(x_i,t_j)-\Psi_{i,j}^{(k)}\tilde\Psi_{i,j}^{(k)}\right| =0,\label{eq:m_convergence_repeat}\\
    \lim_{k\to\infty}
    \lim_{\rev{\Delta t, \Delta x\to 0}}
    \sup_{i,j} \left|v(x_i,t_j)-\qty(-\lambda \ln \Psi_{i,j}^{(k)}) \right| =0.\label{eq:v_convergence_repeat}
  \end{align}
\end{theorem}

\begin{remark}
  The theorems above describe the behavior of the error between the solution of the scheme and the solution of the PDE when the fictitious play iterations are repeated. 
  Theorem \ref{thm:convergence-repeat} describes the magnitude of the error with respect to a finite number of iterations $k$ and discretization parameters $\Delta x$ and $\Delta t$, while Theorem \ref{thm:convergence-repeat-limit} describes the limiting properties (see the proof in Section \ref{sec:proof} for the precise meaning of limit operation).
  In the process of proving these theorems, the convergence of the scheme for the respective forward and backward equations needs to be stated. 
  This is addressed in Propositions \ref{thm:convergence} and \ref{thm:convergence-conservative} in Section \ref{sec:proof}.
  \rev{
  Note that the convergence may not be uniform with respect to $k$ in Theorem \ref{thm:convergence-repeat}. 
  In addition, the convergence of $\Psi^{(k)}$ and $\tilde \Psi^{(k)}$ in Theorem \ref{thm:convergence-repeat-limit} for \rerev{fixed discretization parameters} has not been addressed and is left as an issue for future work.
  }
\end{remark}

\begin{remark}
  While the above theorems are only for one-dimensional systems, the proofs in Section \ref{sec:proof} do not contain operations that are impossible in two or more dimensions.
  We thus believe that it is possible to give similar results for $n$-dimensional systems by appropriately extending CFL condition \eqref{eq:CFL}.
  However, since this would be lengthy, the results we give here are limited to the one-dimensional case.
\end{remark}

\section{Proof of theorems}\label{sec:proof}

Before starting the convergence proof, we give a bound for distances on $\calP(\Omega)$.
\begin{proposition}\label{prop:d1-sup}
  \rev{There exists a positive constant $C_\Omega$, and for any two probability density functions $m, m'\in\calP(\Omega)$, the following is satisfied:}
  \begin{align}
    \rev{\bfd_1(m,m')\le C_\Omega\|m-m'\|_\infty = C_\Omega \sup_{x\in\Omega}|m(x)-m'(x)|.  }
  \end{align}
\end{proposition}
\begin{proof}
  \rev{See Appendix.}
\end{proof}
\rerev{For the proof, we also give a proposition that states the independence of variables.}
\begin{proposition}
  \rerev{
    Suppose that Assumption \ref{assmp:convergence_analysis}-(1) holds.
    Let $L_\psi^{(k)}$, $L_{\tilde\psi}^{(k)}$, $L_{\psi_{xx}}^{(k)}$, $L_{\psi_{t}}^{(k)}$, $L_{\tilde\psi_{xx}}^{(k)}$, and $L_{\tilde\psi_{t}}^{(k)}$ denote the Lipschitz constants of $\psi^{(k)}(\cdot,t)$, $\tilde \psi^{(k)}(\cdot,t)$, $\partial_{xx}\psi(\cdot,t)$, $\partial_{x}\psi(x,\cdot)$, $\partial_{xx}\tilde\psi(\cdot,t)$, and $\partial_{x}\tilde\psi(x,\cdot)$, respectively.
    Then, there exist constants $L_\psi$, $L_{\tilde\psi}$, $L_{\psi_{xx}}$, $L_{\psi_{t}}$, $L_{\tilde\psi_{xx}}$, and $L_{\tilde\psi_{t}}$ which satisfy
    \begin{align}
      L_q^{(k)} \le L_q < \infty,
    \end{align}
    where $q$ denotes $\psi$, $\tilde\psi$, $\psi_{xx}$, $\psi_{t}$, $\tilde\psi_{xx}$, $\tilde\psi_{t}$.
    That is, these constants can be assumed to be independent of $k$.
  }
\end{proposition}
\begin{proof}
  \rerev{
  The proof is immediately derived from the result of Lemma 2.4 of \cite{Cardaliaguet2017Learninga}, which states that the iterated variables $\left(m^{(k)}, v^{(k)}\right)$ are bounded in the H\"{o}lder space $\calC^{2+\eta, 1+\eta/2}$. 
  }
\end{proof}

\subsection{Convergence for the backward equation}

We start by studying the convergence property of the finite difference method applied to a backward equation.
\rev{Suppose that smooth $\bar m=\bar m(x,t)\ge 0$ and $v_T=v_T(x)$ are given, where the Lipschitz constant of $\bar m(\cdot,t)$ is denoted as $L_m$.}
Then, we consider the backward equation
\begin{align}
  \begin{split}\label{eq:ADR-fp-non-conservative_}
    \partial_t \psi(x ,t) &= -  f(x)\partial_x \psi(x,t) - \nu \partial_{xx}\psi(x,t) + \frac{1}{\lambda}\rev{g(x,\bar m(\cdot,t))}\psi(x,t),
  \end{split}
\end{align}
where $\psi(x,T) = \exp(-v_T(x)/\lambda)$.
For a given $\bar M= \qty(\bar M_{i,j})\ge 0$, we consider the finite difference scheme
\begin{align}
  \begin{split}    
    \frac{\Psi_{i,j-1}-\Psi_{i,j}}{\Delta t} &= f_i^{+} \frac{\Psi_{i+1,j}-\Psi_{i,j}}{\Delta x}+f_i^{-} \frac{\Psi_{i,j}-\Psi_{i-1,j}}{\Delta x}\\
    &\quad + \nu\frac{\Psi_{i+1,j}-2\Psi_{i,j}+\Psi_{i+1,j}}{\Delta x^2} - \frac{1}{\lambda}\rev{g\left(x_i, \bar M_{j}\right)}\Psi_{i,j-1},
  \end{split}\label{eq:upwind_ADR_}
\end{align}
where $\Psi_{i,N_t} = \exp(-v_T(x_i)/\lambda)$.
First, we give a bound for the solution of the scheme.
\begin{proposition}\label{prop:bound_Psi}
  Suppose that Assumption \ref{assmp:convergence_analysis} is satisfied.
  Then there exists $\Gamma_{\min}>0$ and $\Gamma_{\max}>0$ such that
  \begin{align}\label{eq:Psi_bound}
    \Gamma_{\min} \le \Psi_{i,j} \le \Gamma_{\max}.
  \end{align}
  Therein, $\Gamma_{\max}$ depends only on $\lambda$ and $v_T$, and $\Gamma_{\min}$ on $\lambda$, $v_T$, $T$, and $g$.
  In particular, $\Gamma_{\max}$ and $\Gamma_{\min}$ are independent of $\bar M_{i,j}$.
\end{proposition}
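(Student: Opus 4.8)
The plan is to prove both bounds through a discrete maximum principle, the key being that the scheme treats the reaction term \emph{implicitly} on $\Psi_{i,j-1}$. First I would rearrange \eqref{eq:upwind_ADR_} into an explicit backward recursion: multiplying by $\Delta t$ and moving the reaction term to the left yields
\[
\left(1 + \tfrac{\Delta t}{\lambda}\, g_{i,j}\right)\Psi_{i,j-1} = a_i^{+}\Psi_{i+1,j} + a_i^{0}\Psi_{i,j} + a_i^{-}\Psi_{i-1,j},
\]
where $g_{i,j} := g(x_i,\bar M_{i,j}) \ge 0$ and the stencil coefficients are $a_i^{+} = \alpha f_i^{+} + \beta\nu$, $a_i^{-} = -\alpha f_i^{-} + \beta\nu$, and $a_i^{0} = 1 - \alpha|f_i| - 2\beta\nu$. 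The upwind splitting makes $a_i^{+}\ge 0$ and $a_i^{-}\ge 0$ (since $f_i^{-}\le 0$), while the CFL condition \eqref{eq:CFL} forces $a_i^{0}>0$; a one-line computation using $f_i^{+}-f_i^{-}=|f_i|$ gives $a_i^{+}+a_i^{0}+a_i^{-}=1$. Hence the right-hand side is a convex combination of $\Psi_{i+1,j},\Psi_{i,j},\Psi_{i-1,j}$, and the prefactor $(1+\tfrac{\Delta t}{\lambda}g_{i,j})^{-1}$ lies in $(0,1]$ because $g_{i,j}\ge 0$.

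From this representation, positivity follows by backward induction: $\Psi_{i,N_t}=\exp(-v_T(x_i)/\lambda)>0$, and each step exhibits $\Psi_{i,j-1}$ as a positive multiple of a convex combination of positive numbers. For the \emph{upper} bound I would use that the prefactor is $\le 1$ and that a convex combination never exceeds the maximum of its arguments, giving $\max_i\Psi_{i,j-1}\le\max_i\Psi_{i,j}$; iterating backward from $j=N_t$ yields $\Psi_{i,j}\le\max_i\exp(-v_T(x_i)/\lambda)\le\exp(\|v_T\|_\infty/\lambda)=:\Gamma_{\max}$, depending only on $\lambda$ and $v_T$. For the \emph{lower} bound, a convex combination is never below the minimum of its arguments, and the prefactor is bounded below by $(1+\tfrac{\Delta t}{\lambda}G)^{-1}$ for any upper bound $G$ on the $g_{i,j}$, so $\min_i\Psi_{i,j-1}\ge(1+\tfrac{\Delta t}{\lambda}G)^{-1}\min_i\Psi_{i,j}$. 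Iterating over all $N_t$ steps and using $1+x\le e^{x}$ together with $N_t\Delta t=T$ gives $\min_i\Psi_{i,j}\ge e^{-\|v_T\|_\infty/\lambda}(1+\tfrac{\Delta t}{\lambda}G)^{-N_t}\ge\exp\!\big(-(\|v_T\|_\infty+TG)/\lambda\big)=:\Gamma_{\min}$, depending only on $\lambda,v_T,T$ and $g$.

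The one delicate point, and the place I expect to spend the most care, is producing the upper bound $G$ on $g_{i,j}=g(x_i,\bar M_{i,j})$ that makes $\Gamma_{\min}$ genuinely independent of $\bar M$. Since the assumptions only give $g\ge 0$ and $\partial_m g\ge 0$, $g$ may grow in its second argument, so one cannot bound it over all of $[0,\infty)$ in general; instead I would take $G:=\sup_{x\in\Omega,\,0\le m\le \Gamma_{\max}\tilde\Gamma_{\max}} g(x,m)$, which is finite by continuity on a compact set and is the uniform bound relevant in the application, where $\bar M=\bar M^{(k)}$ inherits the uniform density bound $\Gamma_{\max}\tilde\Gamma_{\max}$ from this proposition and its companion \ref{prop:bound_tildePsi}. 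Once this bound is fixed, the remaining arguments are routine bookkeeping on the convex-combination structure established above.
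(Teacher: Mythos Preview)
Your argument is essentially the same as the paper's: rewrite the scheme as $(1+\tfrac{\Delta t}{\lambda}g_{i,j})\Psi_{i,j-1}$ equal to a convex combination of $\Psi_{i-1,j},\Psi_{i,j},\Psi_{i+1,j}$ (positivity of the coefficients from upwinding and the CFL condition), then iterate. The paper obtains exactly your $\Gamma_{\max}=\sup_x\exp(-v_T(x)/\lambda)$ and $\Gamma_{\min}=\exp(-T\|g\|_\infty/\lambda)\inf_x\exp(-v_T(x)/\lambda)$.

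The one point where you diverge is the last paragraph, and there your proposed fix is circular. The paper simply writes $\|g\|_\infty$ and moves on, implicitly treating $g$ as globally bounded (consistent with its notational convention for $\|\cdot\|_\infty$), so that $\Gamma_{\min}$ depends on ``$g$'' but not on $\bar M$. Your alternative $G=\sup\{g(x,m):x\in\Omega,\ 0\le m\le \Gamma_{\max}\tilde\Gamma_{\max}\}$ requires $\tilde\Gamma_{\max}$ from Proposition~\ref{prop:bound_tildePsi}; but in that proof $\tilde\Gamma_{\max}=\exp(TL_f)\sup_x m_0(x)/\Gamma_{\min}$, which already uses the very $\Gamma_{\min}$ you are trying to define. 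So you cannot bootstrap the density bound this way without an additional argument (e.g.\ an inductive argument over $k$ showing the bounds stabilize, or simply assuming $g$ bounded as the paper effectively does). Apart from this caveat on the definition of $G$, your proof is correct and matches the paper's.
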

\begin{proof}
  By rearranging \eqref{eq:upwind_ADR_}, we obtain
  \begin{align}
    \begin{split}     
      \Psi_{i,j-1}&=\frac{1- \alpha\left|f_{i}\right| -2\beta \nu }{1+\frac{\Delta t}{\lambda} \rev{g(x_i, \bar M_{j})}} \Psi_{i,j}
      + \frac{\alpha f_i^{+} + \beta \nu}{1+\frac{\Delta t}{\lambda} \rev{g(x_i, \bar M_{j})}} \Psi_{i+1,j}
       + \frac{-\alpha f_i^{-} + \beta \nu}{1+\frac{\Delta t}{\lambda} \rev{g(x_i, \bar M_{j})}} \Psi_{i-1,j},
    \end{split}\label{eq:explicit_upwind_ADR_}
  \end{align}
  where we recall $\alpha=\Delta t/\Delta x$ and $\beta=\Delta t/\Delta x^2$.
  For the right-hand side of \eqref{eq:explicit_upwind_ADR_} to be positive, the following must hold for the first term:
  \begin{align}
    1- \alpha\left|f\right| -2\beta \nu  > 0.
  \end{align}
  This is guaranteed with the CFL condition \eqref{eq:CFL}.
  Next, we calculate $\Gamma_{\min}$ and $\Gamma_{\max}$.
  We first define 
  \begin{align}
    \Gamma_{v_T} &\coloneqq \inf_{x\in\Omega}\exp\qty(-\frac{v_T(x)}{\lambda}),\\
    \Gamma_{v_T}' &\coloneqq \sup_{x\in\Omega}\exp\qty(-\frac{v_T(x)}{\lambda}),
  \end{align}
  Then, for all $i$, we have
  \begin{align}
    \begin{split}      
      \Psi_{i,N_t-1}&\ge \qty{\frac{1- \alpha\left|f_{i}\right| -2\beta \nu }{1+\frac{\Delta t}{\lambda} \rev{g(x_i, \bar M_{j})}} + \frac{\alpha f_i^{+} + \beta \nu}{1+\frac{\Delta t}{\lambda} \rev{g(x_i, \bar M_{j})}} + \frac{-\alpha f_i^{-} + \beta \nu}{1+\frac{\Delta t}{\lambda} \rev{g(x_i, \bar M_{j})}}} \Gamma_{v_T}\\
      &=\frac{1}{1+\frac{\Delta t}{\lambda} \rev{g(x_i, \bar M_{j})}}\Gamma_{v_T}\\
      &\ge {\frac{1}{1+\frac{\Delta t}{\lambda} \|g\|_\infty}\Gamma_{v_T}},
    \end{split}
  \end{align}
  By sequentially applying this to $j=N_t,\ldots, 0$, the following inequality is established for all $i$ and $j$:
  \begin{align}\label{eq:def_varepsilon_Psi}
    \begin{split}      
      \Psi_{i,j}&\ge {\qty(\frac{1}{1+\frac{\Delta t}{\lambda} \|g\|_\infty})^{N_t}\Gamma_{v_T}
      \ge \exp\qty(\frac{T}{\lambda}\|g\|_\infty)^{-1}\Gamma_{v_T}
      \eqqcolon \Gamma_{\min}}.
    \end{split}
  \end{align}
  Similarly, for all $i$, we have
  \begin{align}
    \begin{split}      
      \Psi_{i,N_t-1}&\le \frac{1}{1+\frac{\Delta t}{\lambda} \rev{g(x_i, \bar M_{j})}}\Gamma_{v_T}'
      \le \Gamma_{v_T}'.
    \end{split}
  \end{align}
  By sequentially applying this to $j=N_t,\ldots, 0$, we have
  \begin{align}\label{eq:def_varepsilon_prime_Psi}
    \begin{split}      
      \Psi_{i,j}&\le \Gamma_{v_T}'\eqqcolon \Gamma_{\max},
    \end{split}
  \end{align}
  which ends the proof.
\end{proof}

\begin{proposition}\label{thm:convergence}
  \rerev{Suppose that Assumption \ref{assump:PDE-existence-uniform-fp-convergence}-(1)--(4) and Assumption \ref{assmp:convergence_analysis} are satisfied.}
  Set 
  \begin{align}\label{eq:assump_m_small}
    \varepsilon(\bar m, \bar M) \coloneqq \sup_{i,j} \qty|\bar m(x_i,t_j)-\bar M_{i,j}|.
  \end{align}
  Then, for the solution $\psi$ for \eqref{eq:ADR-fp-non-conservative_} and $\Psi$ for \eqref{eq:upwind_ADR_}, there exist constants $C_1,C_2>0$ that are independent of $\Delta x, \Delta t, \bar m, \bar M$, and we have
  \begin{align}\label{eq:psi_error_bound}
    \rev{\sup_{i,j} \qty|\psi(x_i,t_j)-\Psi_{i,j}| < C_1 \qty((\Delta x)^\eta+ (\Delta t)^{\frac{\eta}{2}})} + C_2\varepsilon(\bar m, \bar M).
  \end{align}
\end{proposition}
\begin{proof}
  Setting $e_{i,j} \coloneqq  \Psi_{i,j} - \psi(x_i, t_j)$, we have by 
  \eqref{eq:ADR-fp-non-conservative_} and \eqref{eq:upwind_ADR_} 
  \begin{align}
    \begin{split}      
      & \frac{e_{i,j-1} - e_{i,j}}{\Delta t} 
            -f_i^{+} \frac{e_{i+1,j}-e_{i,j}}{\Delta x}
      -f_i^{-} \frac{e_{i,j}-e_{i-1,j}}{\Delta x} - \nu \frac{e_{i+1,j}-2e_{i,j}+e_{i+1,j}}{\Delta x^2} +\frac{1}{\lambda} \rev{g\left(x_i, \bar M_{j}\right)}e_{i,j-1}\\
      &=\underbrace{\frac{1}{\lambda}\left[ \rev{g\left(x_i, \bar M_{j}\right)}(
      \psi(x_i,t_{j})-\psi(x_i,t_{j-1}))+
      (\rev{g\left(x_i, \bar m(\cdot,t_j)\right)} - \rev{g\left(x_i, \bar M_{j}\right)})\psi(x_i,t_j)\right]}_{=r^1_{i,j}} \\
      & 
      \mbox{ }\quad +\underbrace{\left(\frac{\psi(x_i,t_{j-1}) - \psi(x_i,t_j)}{\Delta t} +
      \partial_t\psi(x_i,t_j)\right)}_{=r^2_{i,j}}\\
      &\mbox{ }\quad +\underbrace{\left(
      f^+_i\frac{\psi(x_{i+1},t_{j}) - \psi(x_i,t_j)}{\Delta x} 
      +f^-_i\frac{\psi(x_{i},t_{j}) - \psi(x_{i-1},t_j)}{\Delta x} -
      f(x_i)\partial_x\psi(x_i,t_j)
      \right)}_{=r^3_{i,j}}\\
      &\mbox{ }\quad +\underbrace{\left(
      \nu\frac{\psi(x_{i+1},t_{j}) -2\psi(x_{i},t_{j})+ \psi(x_i,t_j)}{\Delta x^2} 
      -\nu \partial_{xx}\psi(x_i,t_j)
      \right)}_{=r^4_{i,j}}.
      \label{eq:eps_1}
    \end{split}
  \end{align}
  This is equivalently written as 
  \begin{multline}
   \qty[1 + \frac{\Delta t}{\lambda}  \rev{g\left(x_i, \bar M_{j}\right)}]e_{i,j-1}
  =(1-\alpha |f_i|-2\nu\beta)e_{i,j}\\
  +(\alpha f_i^++\nu\beta)e_{i+1,j}
  +(-\alpha f_i^-+\nu\beta)e_{i-1,j}+\Delta t
  (r^1_{i,j}+r^2_{i,j}+r^3_{i,j}+r^4_{i,j}).
  \end{multline}
  The standard error estimates for the difference quotients give
  \begin{equation}
  |r^2_{i,j}| \le \rev{(\Delta t)^{\frac{\eta}{2}}R_{1,t}},\quad 
  |r^3_{i,j}| \le (\Delta x)R_{1,x},\quad
  |r^4_{i,j}| \le \rev{(\Delta x)^\eta R_{2,x}}, 
  \end{equation}
  where 
  \begin{equation}
  R_{1,t}=\rev{L_{\psi_{t}}},\quad 
  R_{1,x}=\frac{1}{2}\|f\partial_{xx}\psi\|_{\infty},\quad 
  R_{2,x}=\rev{L_{\psi_{xx}}}.
  \end{equation}
  Moreover,  
  \begin{align}
    \begin{split}\label{eq:r1}   
      |r^1_{i,j}| 
      &\le (\Delta t) R_{2,t} + \rev{\bfd_1(\bar M_{j},\bar m(\cdot,t_j)) \frac{\|\psi\|_\infty}{\lambda}
      \|\partial_m g\|_\infty L_g}\\
      &\le (\Delta t) R_{2,t} + \rev{\|\bar M_{j}-\bar m(\cdot,t_j)\|_\infty R_m}\\
      &\le (\Delta t) R_{2,t} + \varepsilon(\bar m, \bar M) R_m + \rev{(\Delta x) L_m},
    \end{split}
  \end{align}
  where 
  \begin{align}
  \rev{
    R_{2,t}=\frac{\|\partial_t\psi\|_\infty }{\lambda}\|g\|_\infty, \
    R_m=\frac{\|\psi\|_\infty}{\lambda}
    \|\partial_m g\|_\infty L_g C_\Omega.
  }
  \end{align}
  \rev{In the first line of \eqref{eq:r1}, we have used the Lipschitz continuity of $g$ in Assumption \ref{assump:PDE-existence-uniform-fp-convergence}, where $L_g$ denotes Lipschitz constant of $g$.
  Then, in the second line, we have applied Proposition \ref{prop:d1-sup}.
  Finally, we used the smoothness assumption of $\bar m$, where $L_m$ denotes Lipschitz constant of $\bar m$.}
  Summing up, we deduce
  \begin{align}
    |r^1_{i,j}+r^2_{i,j}+r^3_{i,j}+r^4_{i,j}|
    \le \rev{R:=(\Delta t)^{\frac{\eta}{2}} R_{1,t} + (\Delta t)R_{2,t} 
    +(\Delta x)(R_{1,x}+L_m) + (\Delta x)^{\eta}R_{2,x}}
    + \varepsilon(\bar m, \bar M) R_m.
  \end{align}
  Consequently, setting $E_j=\max_{i}|e_{i,j}|$, we have by \eqref{eq:eps_1} and \eqref{eq:CFL}
  \begin{align}
    \begin{split}      
        |e_{i,j-1}|
        &\le \left(1+\frac{\Delta t}{\lambda} \rev{g\left(x_i, \bar M_{j}\right)}\right)^{-1} 
        \left[(1-\alpha |f_i|-2\nu\beta)|e_{i,j}|
        \right. \\
        & { }\quad \left. +(\alpha f_i^++\nu\beta)|e_{i+1,j}| +(-\alpha f_i^-+\nu\beta)|e_{i-1,j}|+\Delta t R
        \right]\\
        &\le E_{j}+\Delta t R,
    \end{split}
  \end{align}
  since $\left(1+\frac{\Delta t}{\lambda} \rev{g\left(x_i, \bar M_{j}\right)}\right)^{-1}<1$. 
  This implies
  \begin{equation}
  E_{j-1} \le E_{j}+\Delta t R\le \cdots \le E_{N_t}+(N_t-j+1)\Delta t R\le TR,
  \end{equation}
  for any $j=1,\ldots,N_t$. 
  Since $E_{N_t} = 0$, setting 
  \begin{align}
    C_1 &\coloneqq \rev{T\max\{  R_{1,x}+L_m,R_{2,x},R_{1,t},R_{2,t}\}},\\
    C_2 &\coloneqq T R_m,
  \end{align}
  completes the proof of the desired estimate \eqref{eq:psi_error_bound}.  

\end{proof}

\subsection{Convergence for the forward equation}

We proceed to an examination of the convergence property of the finite difference method applied to a forward equation. 
For a given smooth function $\bar{m}=\bar{m}(x,t)$ and $m_0 =m_0(x)$, we consider the forward equation
\begin{align}
  \begin{split}\label{eq:ADR-fp-conservative_}
    \partial_t \tilde \psi(x ,t) &= - \partial_x( f(x) \tilde \psi(x,t)) + \nu \partial_{xx}\tilde \psi(x,t) - \frac{1}{\lambda}\rev{g(x,\bar m(\cdot,t))} \tilde \psi(x,t),
  \end{split}
\end{align}
where $\tilde{\psi}(x,0)=m_0(x)/\psi(x,0)$ with the solution $\psi$ of \eqref{eq:ADR-fp-non-conservative_}. 
For a given $M=(M_{i,j})$, we consider the finite difference scheme
\begin{align}
  \begin{split}    
    \frac{\tilde\Psi_{i,j+1}-\tilde\Psi_{i,j}}{\Delta t} &= -\frac{1+\sgn f_i}{2} \frac{f_{i}\tilde\Psi_{i,j} - f_{i-1}\tilde\Psi_{i-1,j}}{\Delta x}
     - \frac{1-\sgn f_i}{2}  \frac{f_{i+1}\tilde\Psi_{i+1,j} - f_{i}\tilde\Psi_{i,j}}{\Delta x}\\
    &\quad+ \nu\frac{\tilde\Psi_{i+1,j}-2\tilde\Psi_{i,j}+\tilde\Psi_{i+1,j}}{\Delta x^2} - \frac{1}{\lambda}\rev{g\left(x_i, \bar M_{j}\right)}\tilde\Psi_{i,j+1},
  \end{split}\label{eq:upwind_ADR_conservative_}
\end{align}
where $\tilde{\Psi}_{i,0}=m_0(x_i)/\psi(x_i,0)$.

\begin{proposition}\label{prop:bound_tildePsi}
  Suppose that Assumption \ref{assmp:convergence_analysis} is satisfied.
  Then there exists $\tilde{\Gamma}_{\max}>0$ such that 
  \begin{align}
    0< \tilde\Psi_{i,j}&\le \tilde \Gamma_{\max},
  \end{align}
  where $\tilde \Gamma_{\max}>0$ depends only on $\lambda$, $v_T$, $T$, $g$, and $m_0$;
  it is independent of $\bar M_{i,j}$. 
\end{proposition}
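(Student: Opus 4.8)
The plan is to follow the template of Proposition~\ref{prop:bound_Psi}, adapted to the conservative (forward) scheme. First I would move the nonstandard reaction term, which is evaluated at the new time level $j+1$, to the left-hand side of \eqref{eq:upwind_ADR_conservative_} and solve for $\tilde\Psi_{i,j+1}$, obtaining an explicit three-point update
\begin{equation*}
\Bigl(1+\tfrac{\Delta t}{\lambda}g(x_i,\bar M_{i,j})\Bigr)\tilde\Psi_{i,j+1}
= a_i\,\tilde\Psi_{i,j}+b_i\,\tilde\Psi_{i-1,j}+c_i\,\tilde\Psi_{i+1,j},
\end{equation*}
with $a_i=1-\alpha|f_i|-2\beta\nu$, $b_i=\alpha\frac{1+\sgn f_i}{2}f_{i-1}+\beta\nu$ and $c_i=-\alpha\frac{1-\sgn f_i}{2}f_{i+1}+\beta\nu$ (after correcting the evident misprint in the central-difference term of \eqref{eq:upwind_ADR_conservative_}, where $\tilde\Psi_{i-1,j}$ should appear). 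Because $g\ge 0$, the denominator is at least $1$, which will only help both the positivity and the upper bound.

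For strict positivity I would argue by induction on $j$, starting from $\tilde\Psi_{i,0}=m_0(x_i)/\psi(x_i,0)>0$, which is positive because $m_0>0$ and $\psi\ge\gamma_{\min}>0$ by Proposition~\ref{prop:bound_lsmfg}. It then suffices that the three coefficients be nonnegative with $a_i>0$. The diagonal bound $a_i=1-\alpha|f_i|-2\beta\nu>0$ is exactly the CFL condition \eqref{eq:CFL}; moreover $b_i=\beta\nu>0$ when $f_i<0$ and $c_i=\beta\nu>0$ when $f_i\ge 0$ are automatic. The delicate cases are $b_i=\alpha f_{i-1}+\beta\nu$ when $f_i\ge 0$ and $c_i=-\alpha f_{i+1}+\beta\nu$ when $f_i<0$: these are harmless unless a grid cell straddles a sign change of $f$, where $b_i$ (resp.\ $c_i$) can a priori be negative. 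Such a coefficient stays nonnegative precisely when $\nu\ge\Delta x\,\|f\|_\infty$, i.e.\ under a cell-Péclet condition $\Delta x\le\nu/\|f\|_\infty$ that holds on sufficiently fine grids. Securing this coefficient positivity is the main obstacle, since the CFL condition \eqref{eq:CFL} alone does not imply it; one likely invokes Assumption~\ref{assmp:convergence_analysis}(3) in the refined form that $\Delta x$ is small relative to $\nu/\|f\|_\infty$.

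For the upper bound I would exploit that, once the coefficients are nonnegative, $\tilde\Psi_{i,j+1}$ is bounded by the row sum $a_i+b_i+c_i$ times $\max_k\tilde\Psi_{k,j}$. Unlike the non-conservative backward scheme, this row sum does not collapse to $1$: a direct computation gives $a_i+b_i+c_i=1+\alpha(f_{i-1}-f_i)$ when $f_i\ge 0$ and $=1+\alpha(f_i-f_{i+1})$ when $f_i<0$. Here the $\calC^1$ regularity of $f$ enters, for by the mean value theorem $|f_{i\pm1}-f_i|\le\|f'\|_\infty\Delta x$, whence $a_i+b_i+c_i\le 1+\alpha\|f'\|_\infty\Delta x=1+\Delta t\,\|f'\|_\infty$ in either case. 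Writing $\tilde E_j=\max_i\tilde\Psi_{i,j}$ and using $1+\tfrac{\Delta t}{\lambda}g\ge 1$, this yields the discrete Gronwall inequality $\tilde E_{j+1}\le(1+\Delta t\|f'\|_\infty)\tilde E_j$, hence $\tilde E_j\le e^{\|f'\|_\infty T}\tilde E_0$. Finally $\tilde E_0=\max_i m_0(x_i)/\psi(x_i,0)\le\|m_0\|_\infty/\gamma_{\min}$, so one may take $\tilde\Gamma_{\max}=e^{\|f'\|_\infty T}\,\|m_0\|_\infty/\gamma_{\min}$, which depends only on the data $T$, $f$, $m_0$ and---through $\gamma_{\min}$---on $\lambda$, $v_T$ and $g$, but is independent of $\bar M_{i,j}$, as claimed.
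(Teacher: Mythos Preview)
Your approach matches the paper's: rearrange the scheme into an explicit three-point update, invoke the CFL condition for the diagonal coefficient, bound the row sum by $1+\Delta t\,L_f$ via the Lipschitz continuity of $f$, and iterate (discrete Gronwall) to obtain $\tilde\Gamma_{\max}=e^{TL_f}\tilde\Gamma_{m_0}$. Two points of difference are worth noting.

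First, the paper bounds the initial data using the \emph{discrete} lower bound $\Gamma_{\min}$ on $\Psi_{i,0}$ from Proposition~\ref{prop:bound_Psi}, setting $\tilde\Gamma_{m_0}=\sup_x m_0(x)/\Gamma_{\min}$, since the actual scheme \eqref{eq:upwind_ADR_conservative_initial} has $\tilde\Psi_{i,0}=m_0(x_i)/\Psi_{i,0}$ rather than $m_0(x_i)/\psi(x_i,0)$; this is what makes $\tilde\Gamma_{\max}$ depend only on $\lambda,v_T,T,g,m_0$ as stated. Your use of $\gamma_{\min}$ works too but pulls in the PDE solution.

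Second, you are in fact more careful than the paper about the off-diagonal coefficients: the paper simply asserts ``thanks to the CFL condition \eqref{eq:CFL}, the right-hand side is non-negative,'' without addressing the sign-change cells you flag. Your concern is real but milder than stated: if $f_i\ge 0$ and $f_{i-1}<0$, the intermediate-value theorem places a zero of $f$ in $[x_{i-1},x_i]$, so $|f_{i-1}|\le L_f\,\Delta x$ and the requirement $\alpha|f_{i-1}|\le\beta\nu$ reduces to $\Delta x\le\sqrt{\nu/L_f}$ rather than the cell-P\'eclet bound $\Delta x\le\nu/\|f\|_\infty$.
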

\begin{proof}
  By rearranging \eqref{eq:upwind_ADR_conservative_}, we obtain
  \begin{align}
    \begin{split}
      \tilde\Psi_{i,j+1}&=\frac{1- \alpha\left|f_{i}\right| -2\beta \nu }{1+\frac{\Delta t}{\lambda} \rev{g(x_i, \bar M_{j})}} \tilde\Psi_{i,j}
      + \frac{-\alpha \frac{1-\sgn f_i}{2}f_{i+1} + \beta \nu}{1+\frac{\Delta t}{\lambda} \rev{g(x_i, \bar M_{j})}} \tilde\Psi_{i+1}^{j}
       + \frac{\alpha \frac{1+\sgn f_i}{2}f_{i-1}  + \beta \nu}{1+\frac{\Delta t}{\lambda} \rev{g(x_i, \bar M_{j})}} \tilde\Psi_{i-1,j}.
    \end{split}\label{eq:explicit_upwind_ADR_conservative}
  \end{align}
  Thanks to the CFL condition \eqref{eq:CFL}, the right-hand side is non-negative. 
  Introducing 
  \begin{align}
    \tilde{\Gamma}_{m_0} \coloneqq 
     \frac{\sup_x m_0(x)}{\Gamma_{\min}},
  \end{align}
  with $\Gamma_{\min}$ defined in \eqref{eq:def_varepsilon_Psi}, we can perform an estimation as 
  \begin{align}
    \begin{split}      
      \tilde\Psi_{i,1}
      &\le \qty{\qty(1- \alpha\left|f_{i}\right|) 
      + \qty(-\alpha \frac{1-\sgn f_i}{2}f_{i+1}) 
      + \qty(\alpha \frac{1+\sgn f_i}{2}f_{i-1})}\tilde{\Gamma}_{m_0}\\
      &\le \qty(1+ \alpha\max_{j\in\{i\pm 1\}}\left|f_{i} - f_{j}\right|)\tilde{\Gamma}_{m_0}\\
      &\le \qty(1+ \Delta t L_f)\tilde{\Gamma}_{m_0},
    \end{split}
  \end{align}
  where $L_f$ denotes Lipschitz constant of $f$ in $\Omega$. 
  Consequently, 
  \begin{equation}
    \tilde\Psi_{i,j}\le \qty(1+ \Delta t L_f)^{N_t}\tilde{\Gamma}_{m_0}
    \le \exp(TL_f)\tilde{\Gamma}_{m_0}
    \eqqcolon \tilde{\Gamma}_{\max},
  \end{equation}\label{eq:tilGamma_max}
  which completes the proof.
\end{proof}

\begin{proposition}\label{thm:convergence-conservative}
  \rerev{Suppose that Assumption \ref{assump:PDE-existence-uniform-fp-convergence}-(1)--(4), \ref{assump:LHJB}, \ref{assmp:convergence_analysis} are satisfied.}
  In addition, suppose that \eqref{eq:assump_m_small} holds.
  Then, for the solution $\tilde\psi$ for \eqref{eq:ADR-fp-conservative_} and $\tilde\Psi$ for \eqref{eq:upwind_ADR_conservative_}, 
  there exist constants $\tilC_1, \tilC_2>0$ that are independent of $\Delta x, \Delta t, \bar m, \bar M$ such that
  \begin{align}\label{eq:tilde_psi_error_bound}
    \rev{\sup_{i,j} \qty|\tilde\psi(x_i,t_j)-\tilde\Psi_{i,j}| < \tilC_1 \qty((\Delta x)^\eta+ (\Delta t)^{\frac{\eta}{2}})} + \tilC_2\varepsilon(\bar m, \bar M).
  \end{align}
\end{proposition}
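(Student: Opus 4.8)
The plan is to mirror the proof of Proposition \ref{thm:convergence}, now tracking the error $e_{i,j}\coloneqq \tilde\Psi_{i,j}-\tilde\psi(x_i,t_j)$ \emph{forward} in time. The crucial simplification is that the discrete and continuous initial data coincide: since the scheme \eqref{eq:upwind_ADR_conservative_} is initialized with $\tilde\Psi_{i,0}=m_0(x_i)/\psi(x_i,0)=\tilde\psi(x_i,0)$, we have $e_{i,0}=0$, so no error is injected at the initial time. This is the forward analogue of the condition $E_{N_t}=0$ used in the backward case. (The replacement of the exact $\psi$ by the numerical $\Psi$ in the initialization is not an issue here; it is deferred to the coupled estimate of Theorem \ref{thm:convergence-repeat}.)

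First I would subtract the PDE \eqref{eq:ADR-fp-conservative_}, sampled at $(x_i,t_j)$, from the scheme \eqref{eq:upwind_ADR_conservative_}, obtaining an error recursion of the same shape as \eqref{eq:eps_1} with four residuals: a reaction residual $r^1$ collecting the time lag of $\tilde\psi$ together with the consistency defect $g(x_i,\bar m(x_i,t_j))-g(x_i,\bar M_{i,j})$; a temporal residual $r^2=O(\Delta t)$; a conservative-upwind advection residual $r^3=O(\Delta x)$; and a central-difference diffusion residual $r^4=O((\Delta x)^2)$. Using the $\calC^4$/$\calC^2$ regularity of Assumption \ref{assmp:convergence_analysis}, Taylor expansion bounds $r^2$ and $r^4$ exactly as in the backward proof. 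The key difference is in $r^3$: because the advection now appears in divergence form, its truncation error must be measured on the \emph{flux} $f\tilde\psi$, so that $|r^3|\le(\Delta x)\tilde R_{1,x}$ with $\tilde R_{1,x}$ a multiple of $\|\partial_{xx}(f\tilde\psi)\|_\infty$ (finite by $f\in\calC^1$ and $\tilde\psi\in\calC^2$). The reaction residual is controlled via \eqref{eq:assump_m_small}--\eqref{eq:assump_m_and_M} and $\partial_m g\ge 0$, yielding a term $\varepsilon(\bar m,\bar M)\tilde R_m$. Summing, $|r^1+r^2+r^3+r^4|\le \tilde R$ with $\tilde R=(\Delta t)(\tilde R_{1,t}+\tilde R_{2,t})+(\Delta x)(\tilde R_{1,x}+\tilde R_{2,x})+\varepsilon(\bar m,\bar M)\tilde R_m$.

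Next I would pass to the explicit form of the error equation (the error analogue of \eqref{eq:explicit_upwind_ADR_conservative}) and take absolute values. By the CFL condition \eqref{eq:CFL}, exactly as in Proposition \ref{prop:bound_tildePsi}, the three coefficients multiplying $|e_{i,j}|$, $|e_{i+1,j}|$, $|e_{i-1,j}|$ are non-negative, so the triangle inequality gives $\big(1+\tfrac{\Delta t}{\lambda}g(x_i,\bar M_{i,j})\big)|e_{i,j+1}|\le(\text{coeff.\ sum})\,E_j+\Delta t\,\tilde R$, where $E_j\coloneqq\max_i|e_{i,j}|$. The decisive contrast with the backward case is that, because the fluxes carry the neighboring values $f_{i\pm1}$ rather than $f_i$, the coefficient sum is \emph{not} equal to $1$: splitting on $\sgn f_i$ and using $|f_{i\pm1}-f_i|\le L_f\Delta x$ (with $L_f$ the Lipschitz constant of $f$) shows it is bounded by $1+\Delta t L_f$. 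Since $\big(1+\tfrac{\Delta t}{\lambda}g\big)^{-1}\le 1$, this produces the discrete Gr\"onwall recursion $E_{j+1}\le(1+\Delta t L_f)E_j+\Delta t\,\tilde R$.

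Finally, iterating from $E_0=0$ gives $E_j\le\Delta t\,\tilde R\sum_{\ell=0}^{j-1}(1+\Delta t L_f)^\ell\le\tfrac{\tilde R}{L_f}\big(e^{TL_f}-1\big)$ for every $j$, using $j\Delta t\le T$. Substituting the bound on $\tilde R$ and setting $\tilC_1\coloneqq\tfrac{e^{TL_f}-1}{L_f}\max\{\tilde R_{1,x}+\tilde R_{2,x},\,\tilde R_{1,t}+\tilde R_{2,t}\}$ and $\tilC_2\coloneqq\tfrac{e^{TL_f}-1}{L_f}\tilde R_m$ yields \eqref{eq:tilde_psi_error_bound}; these constants are independent of $\Delta x,\Delta t,\bar m,\bar M$ because $L_f$, $T$, and the residual constants depend only on the fixed data through $\Gamma_m$. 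The one genuinely new obstacle relative to Proposition \ref{thm:convergence} is the conservative (divergence-form) advection: it both forces the Gr\"onwall growth factor $1+\Delta t L_f$ (hence the constant $e^{TL_f}$ in place of a bare $T$) and requires the advection truncation error to be taken on the flux $f\tilde\psi$; the remainder is a direct transcription of the backward argument.
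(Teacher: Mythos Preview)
Your approach is essentially the paper's: the same error recursion with the four residuals, the same observation that the conservative-upwind coefficients sum to at most $1+\Delta t L_f$ (whence the Gr\"onwall factor $e^{TL_f}$), and the same bound $\tilde R$ on the total residual. The one substantive difference is the initial condition. You take the line ``$\tilde\Psi_{i,0}=m_0(x_i)/\psi(x_i,0)$'' literally, so $E_0=0$ and the initial error is absent from your $\tilC_1,\tilC_2$. The paper's proof, however, treats the initial data as $\tilde\Psi_{i,0}=m_0(x_i)/\Psi_{i,0}$ (consistent with the actual scheme \eqref{eq:upwind_ADR_conservative_initial}); it bounds the resulting nonzero $\tilE_0$ via Proposition~\ref{thm:convergence} as
\[
|\tilde\Psi_{i,0}-\tilde\psi(x_i,0)|\le \frac{\|m_0\|_\infty}{\gamma_{\min}\Gamma_{\min}}\bigl(C_1(\Delta x+\Delta t)+C_2\,\varepsilon(\bar m,\bar M)\bigr),
\]
and then propagates this through the Gr\"onwall iteration, so that the paper's constants carry an extra term $e^{TL_f}\|m_0\|_\infty C_i/(\gamma_{\min}\Gamma_{\min})$ for $i=1,2$. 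Your remark that the discrepancy can be ``deferred to Theorem~\ref{thm:convergence-repeat}'' is correct in spirit, but note that the paper chooses to absorb it already here, which is why its $\tilC_1,\tilC_2$ are larger than yours and why Theorem~\ref{thm:convergence-repeat} can quote Proposition~\ref{thm:convergence-conservative} without any additional initial-error bookkeeping.
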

\begin{proof}
  Set $\tilde e_{i,j} \coloneqq  \tilde\Psi_{i,j} - \tilde\psi(x_i, t_j)$.
  First, we estimate the error $\tilde e_{i,0}$ at the initial time.
  By using \eqref{eq:assump_m_small} and Proposition \ref{thm:convergence}, we obtain 
  \begin{align}
    \begin{split}\label{eq:tile0_bound}
      \qty|\tilde\Psi_{i,0} - \tilde \psi(x_i,0) | 
      &=  m_0(x_i)\qty| \frac{1}{\Psi_{i,0}} -\frac{1}{\psi(x_i,0)}|\\
      &\le \frac{m_0(x_i)}{\gamma_{\min}\Gamma_{\min}}|e_{i,0}|
      \le \frac{m_0(x_i)}{\gamma_{\min}\Gamma_{\min}}\qty( C_1 \rev{\qty((\Delta x)^\eta + (\Delta t)^\frac{\eta}{2})} + C_2\varepsilon(\bar m,\bar M )),
    \end{split}
  \end{align}
  where we have used \eqref{eq:psi_bound} and \eqref{eq:Psi_bound}.
  Letting $\tilde \psi_{i,j}=\tilde \psi(x_i,t_j)$, we observe 
  \begin{align}
    \begin{split}      
      &\frac{\tilde e_{i,j+1}-\tilde e_{i,j}}{\Delta t} 
      +\frac{1+\sgn f_i}{2} \frac{f_{i}\tilde e_{i,j} - f_{i-1}\tilde e_{i-1,j}}{\Delta x}
          + \frac{1-\sgn f_i}{2}  \frac{f_{i+1}\tilde e_{i+1,j} - f_{i}\tilde e_{i,j}}{\Delta x}\\
      &   { }\qquad  -\nu\frac{\tilde e_{i+1,j}-2\tilde e_{i,j}+\tilde e_{i+1,j}}{\Delta x^2} + \frac{1}{\lambda}\rev{g\left(x_i, \bar M_{j}\right)}\tilde e_{i,j+1}\\
      &=  
      \underbrace{
      \frac{1}{\lambda}\left[ \rev{g\left(x_i, \bar M_{j}\right)}(\tilde \psi_{i,j}
      -\tilde \psi_{i,j+1})
      +(\rev{g\left(x_i, \bar m(\cdot,t_j)\right)}
      - \rev{g\left(x_i, \bar M_{j}\right)})\tilde \psi_{i,j}\right]}_{=\tilr^1_{i,j}}\\
      & \underbrace{-\frac{\tilde \psi_{i,j+1}-\tilde \psi_{i,j}}{\Delta t} 
      +\partial_t\tilde\psi (x_i,t_j)}_{=\tilr^2_{i,j}}\\
      &\underbrace{- \frac{1+\sgn f_i}{2} \frac{f_{i}\tilde \psi_{i,j} - f_ {i-1}\tilde \psi_{i-1,j}}{\Delta x}
          - \frac{1-\sgn f_i}{2}  \frac{f_{i+1}\tilde \psi_{i+1,j} - f_{i}\tilde \psi_{i,j}}{\Delta x}+(\partial_x(f\tilde\psi)) (x_i,t_j)}_{=\tilr^3_{i,j}}\\
      &\underbrace{  +\nu\frac{\tilde \psi_{i+1,j}-2\tilde \psi_{i,j}+\tilde \psi_{i+1,j}}{\Delta x^2} +-\nu\partial_{xx}\tilde\psi (x_i,t_j)}_{=\tilr^4_{i,j}}.
    \end{split}\label{eq:tileps_1}
  \end{align}
  This is equivalently written as 
  \begin{multline}
   \qty[1 + \frac{\Delta t}{\lambda}  \rev{g\left(x_i, \bar M_{j}\right)}]\tile_{i,j+1}
  =(1-\alpha |f_i|-2\nu\beta)\tile_{i,j}\\
  % +(\alpha f_i^++\nu\beta)\tile_{i+1,j}
  +\qty(\alpha \frac{1-\sgn f_i}{2}f_{i+1} +\nu\beta)\tile_{i+1,j}
  +\qty(-\alpha \frac{1+\sgn f_i}{2} f_{i-1}+\nu\beta)\tile_{i-1,j}\\
  +\Delta t
  (\tilr^1_{i,j}+\tilr^2_{i,j}+\tilr^3_{i,j}+\tilr^4_{i,j}).
  \end{multline}
  The standard error estimates for the difference quotients give
  \begin{equation}
  |\tilr^2_{i,j}| \le \rev{(\Delta t)^{\frac{\eta}{2}}\tilR_{1,t}},\quad 
  |\tilr^3_{i,j}| \le (\Delta x)\tilR_{1,x},\quad
  |\tilr^4_{i,j}| \le \rev{(\Delta x)^\eta \tilR_{2,x}}, 
  \end{equation}
  where 
  \begin{equation}
  \tilR_{1,t}=\rev{L_{\tilde\psi_{t}}},\quad 
  \tilR_{1,x}=\frac{1}{2}\|\partial_{xx}(f\tilde\psi)\|_{\infty},\quad 
  \tilR_{2,x}=\rev{L_{\tilde\psi_{xx}}}.
  \end{equation}
  Moreover,  
  \begin{align}
    \begin{split}      
      |\tilr^1_{i,j}| 
      &\le (\Delta t) \tilR_{2,t} + \rev{\bfd_1 (\bar M_{j},\bar m(\cdot,t_j)) \frac{\|\tilde\psi\|_\infty}{\lambda}\|\partial_m g\|_\infty L_g}\\
      &\le (\Delta t) \tilR_{2,t} + \rev{\|\bar M_{j}-\bar m(\cdot,t_j)\|_\infty \tilR_m}\\
      &\le (\Delta t) \tilR_{2,t} + \varepsilon(\bar m, \bar M) \tilR_m + \rev{(\Delta x)L_m},
    \end{split}
  \end{align}
  where 
  \begin{align}
  \rev{
    \tilR_{2,t} = \frac{\|\partial_t\tilde\psi\|_\infty}{\lambda}\|g\|_\infty, \
    \tilR_m=\frac{\|\tilde\psi\|_\infty}{\lambda}
    \|\partial_m g\|_\infty L_g C_\Omega.
  }
  \end{align}
  Summing up, we deduce
  \begin{align}
    |r^1_{i,j}+r^2_{i,j}+r^3_{i,j}+r^4_{i,j}|
    \le \rev{\tilR:=(\Delta t)^{\frac{\eta}{2}} \tilR_{1,t} + (\Delta t)\tilR_{2,t} 
    +(\Delta x)(\tilR_{1,x} + L_m) + (\Delta x)^{\eta}\tilR_{2,x}}
    + \varepsilon(\bar m, \bar M) \tilR_m.
    \end{align}
  Consequently, setting $\tilE_j=\max_{i}|\tile_{i,j}|$, we have by \eqref{eq:tileps_1} and \eqref{eq:CFL}
  \begin{align}
    \begin{split}
      |\tile_{i,j+1}|&\le \left(1+ \alpha\sup_{k\in{i\pm1}} \left|f_{i} - f_{k}\right| \right) \tilE_j + \Delta t\tilR,\\
      &\le \left(1+ \Delta t L_f \right) \tilE_j + \Delta t\tilR,
    \end{split}
  \end{align}
  since $\left(1+\frac{\Delta t}{\lambda} \rev{g\left(x_i, \bar M_{j}\right)}\right)^{-1}<1$. 
  Here, $L_f$ denotes the Lipschitz constant of $f$ in $\Omega$.
  We then obtain by \eqref{eq:tile0_bound}
  \begin{align}
    \begin{split}
      \tilE_{j+1} &\le \left(1+ \Delta t L_f \right)\tilE_{j}+\Delta t \tilR\\
      &\le \cdots \le \left(1+ \Delta t L_f \right)^{N_t}\tilE_{0}+\Delta t \tilR\left(1+ (1 + \Delta t L_f ) + \cdots (1 + \Delta t L_f )^{N_t}\right)\\
      &\le \exp(TL_f)\frac{\|m_0\|_\infty}{\gamma_{\min}\Gamma_{\min}}\qty( C_1 \rev{\qty((\Delta x)^\eta + (\Delta t)^\frac{\eta}{2})} + C_2\varepsilon(\bar m,\bar M ))
      + \frac{\exp(TL_f)-1}{L_f}\tilR,
    \end{split}
  \end{align}
  for any $j=1,\ldots,N_t$. 
  Finally, setting 
  \begin{align}
  \tilC_1 &\coloneqq \exp(TL_f)\frac{C_1\|m_0\|_\infty}{\gamma_{\min}\Gamma_{\min}} + 
  \frac{\exp(TL_f)-1}{L_f} \rev{\max\{ \tilR_{1,x} + L_m,\tilR_{2,x},\tilR_{1,t},\tilR_{2,t}\}},\\
  \tilC_2 &\coloneqq \exp(TL_f)\frac{C_2\|m_0\|_\infty}{\gamma_{\min}\Gamma_{\min}} + 
  \frac{\exp(TL_f)-1}{L_f} \tilR_m,
  \end{align}
  we complete the proof of the desired estimate \eqref{eq:tilde_psi_error_bound}.  
\end{proof}

\subsection{Convergence for the \rev{\rerev{fictitious} play iteration}}

\begin{proof}[Proof of Theorem \ref{thm:convergence-repeat}]
  We apply Propositions \ref{thm:convergence} and \ref{thm:convergence-conservative}.
  We define
  \begin{align}
    \delta^{(k)}&\coloneqq \sup_{i,j} |m^{(k)}(x_i,t_j) - M_{i,j}^{(k)}|,\\
    \begin{split}
      \varepsilon^{(k)}&\coloneqq \sup_{i,j} |\bar m^{(k)}(x_i,t_j)-\bar M_{i,j}^{(k)}|
      =\sup_{i,j} \left|\frac{1}{k+1}\sum_{\ell=0}^k m^{(\ell)}(x_i,t_j)- \frac{1}{k+1}\sum_{\ell=0}^k M_{i,j}^{(\ell)}\right|,
    \end{split}
  \end{align}
  then we have
  \begin{align}\label{eq:delta_epsilon}
    \varepsilon^{(k)}\le \frac{1}{k+1} \sum_{\ell=0}^k \delta^{(\ell)}.
  \end{align}
  By using the results in Propositions \ref{thm:convergence} and \ref{thm:convergence-conservative}, we derive
  \begin{align}
    \begin{split}\label{eq:delta_bound}
      \delta^{(k)}&= \sup_{i,j}\qty|\psi^{(k)}(x_i,t_j)\tilde\psi^{(k)}(x_i,t_j)-\Psi_{i,j}^{(k)}\tilde\Psi_{i,j}^{(k)}|\\
      &\le \sup_{i,j}\qty|\psi^{(k)}(x_i,t_j)\qty{\tilde\psi^{(k)}(x_i,t_j)-\tilde\Psi_{i,j}^{(k)}}  + \tilde\Psi_{i,j}^{(k)}\qty{\psi^{(k)}(x_i,t_j) - \Psi_{i,j}^{(k)}} |\\
      &\le \sup_{i,j}\left\{\qty|\psi^{(k)}(x_i,t_j)|\qty(\tilC_1 \rev{\qty( (\Delta x)^\eta + (\Delta t)^{\frac{\eta}{2}})} + \tilC_2\varepsilon^{(k-1)}(\bar m,\bar M ))\right.\\
      &\quad\quad + \left.\qty|\tilde\Psi_{i,j}^{(k)}| \qty(C_1 \rev{\qty( (\Delta x)^\eta + (\Delta t)^{\frac{\eta}{2}})} + C_{2}\varepsilon^{(k-1)}(\bar m,\bar M ))\right\}\\
      &\le \bar C_2\varepsilon^{(k-1)}(\bar m,\bar M ) + \bar C_1\rev{\qty( (\Delta x)^\eta + (\Delta t)^{\frac{\eta}{2}})}\\
      &\le \frac{\bar C_2}{k} \sum_{\ell=0}^{k-1} \delta^{(\ell)} + \bar C_1\rev{\qty( (\Delta x)^\eta + (\Delta t)^{\frac{\eta}{2}})},
    \end{split}
  \end{align}
  where we have defined
  \begin{align}
    \bar C_1 &\coloneqq\gamma_{\max}\tilC_1 + \tilde\Gamma_{\max} C_1,\\
    \bar C_2 &\coloneqq \gamma_{\max}\tilC_2 + \tilde\Gamma_{\max} C_2.
  \end{align}

  Now we define $\bar \delta^{(0)}\coloneqq \delta^{(0)}=0$ and 
  \begin{align}
    \bar \delta^{(k)} \coloneqq \bar C_2\left(\max _{\ell \in\{0, \ldots, k-1\}} \delta^{(\ell)}\right) + \bar C_1\rev{\qty((\Delta x)^\eta + (\Delta t)^\frac{\eta}{2})},
  \end{align}
  then, from the definition we have
  \begin{align}
    \begin{split}
      \bar \delta^{(k)} &\le \bar C_2\left(\max _{\ell \in\{0, \ldots, k\}} \delta^{(\ell)}\right) + \bar C_1\rev{\qty((\Delta x)^\eta + (\Delta t)^\frac{\eta}{2})}
      =\bar \delta^{(k+1)}.
    \end{split}
  \end{align}
  Thus, 
  \begin{align}
    \begin{split}
      \delta^{(k)} & \leq \bar{\delta}^{(k)} \\
      &=\bar{C}_{2}\left(\max _{\ell \in\{0, \ldots, k-1\}} \delta^{(\ell)}\right)+\bar{C}_1 \rev{\qty((\Delta x)^\eta + (\Delta t)^\frac{\eta}{2})} \\
      &\le\bar{C}_{2}\left(\max _{\ell \in\{0, \ldots, k-1\}} \bar\delta^{(\ell)}\right)+\bar{C}_1 \rev{\qty((\Delta x)^\eta + (\Delta t)^\frac{\eta}{2})} \\
      & \leq \bar{C}_{2} \bar{\delta}^{(k-1)}+\bar{C}_1 \rev{\qty((\Delta x)^\eta + (\Delta t)^\frac{\eta}{2})} \\
      & \leq \bar{C}_{2}\left(\bar{C}_{2} \bar{\delta}^{(k-2)}+\bar{C}_1 \rev{\qty((\Delta x)^\eta + (\Delta t)^\frac{\eta}{2})}\right)+\bar{C}_1 \rev{\qty((\Delta x)^\eta + (\Delta t)^\frac{\eta}{2})} \\
      & \leq \bar{C}_{2}^{k} \bar{\delta}^{(0)}+\bar{C}_1 \rev{\qty((\Delta x)^\eta + (\Delta t)^\frac{\eta}{2})} \sum_{\ell=0}^{k-1} \bar{C}_{2}^{\ell} \\
      & = \bar{C}_{2}^{k} \bar{\delta}^{(0)}+ \frac{\bar{C}_{2}^{k}-1}{\bar{C}_{2}-1}\bar{C}_1 \rev{\qty((\Delta x)^\eta + (\Delta t)^\frac{\eta}{2})}\\
      & = \frac{\bar{C}_{2}^{k}-1}{\bar{C}_{2}-1}\bar{C}_1 \rev{\qty((\Delta x)^\eta + (\Delta t)^\frac{\eta}{2})}.
    \end{split}
  \end{align}
  Consequently, by letting
  \begin{align}
    \rev{\bar K(k)}&\coloneqq \bar{C}_{1}\frac{\bar{C}_{2}^{k}-1}{\bar{C}_{2}-1},
  \end{align}
  we have 
  \begin{align}
    \rev{\delta^{(k)}} &\le \rev{\bar K(k)}\rev{\qty((\Delta x)^\eta + (\Delta t)^\frac{\eta}{2})},\\
    \begin{split}
      \varepsilon^{(k)} &\le \frac{1}{k+1} \sum_{\ell=0}^k \delta^{(\ell)}\\
      &\le \frac{1}{k+1} \sum_{\ell=0}^k \qty{ \rev{\bar K(\ell)}\rev{\qty((\Delta x)^\eta + (\Delta t)^\frac{\eta}{2})}},\\
      &\le \rev{\bar K(k)}\rev{\qty((\Delta x)^\eta + (\Delta t)^\frac{\eta}{2})}.
    \end{split}
  \end{align}
  Using Proposition \ref{thm:convergence}, we obtain \eqref{eq:psi_convergence} as
  \begin{align}
    \begin{split}\label{eq:psi-repeat-bound}
      \sup_{i,j} \qty|\psi^{(k+1)}(x_i,t_j)-\Psi^{(k+1)}_{i,j}|
      &< C_1 \rev{\qty((\Delta x)^\eta + (\Delta t)^\frac{\eta}{2})} + C_2\varepsilon^{(k)}\\
      &\le \qty{C_1 + C_2\rev{\bar K(k)}}\rev{\qty((\Delta x)^\eta + (\Delta t)^\frac{\eta}{2})}\\
      &\eqqcolon \rev{K(k)}\rev{\qty((\Delta x)^\eta + (\Delta t)^\frac{\eta}{2})}.
    \end{split}
  \end{align}
  Similarly, we obtain \eqref{eq:tilpsi_convergence} by using Proposition \ref{thm:convergence-conservative}:
  \begin{align}
    \begin{split}
      \sup_{i,j} \qty|\tilde\psi^{(k+1)}(x_i,t_j)-\tilde\Psi^{(k+1)}_{i,j}|
      &< \tilC_1 \rev{\qty((\Delta x)^\eta + (\Delta t)^\frac{\eta}{2})} + \tilC_2\varepsilon^{(k)}\\
      &\le \qty{\tilC_1 + \tilC_2\rev{\bar K(k)}}\rev{\qty((\Delta x)^\eta + (\Delta t)^\frac{\eta}{2})}\\
      &\eqqcolon \rev{\tilde K(k)}\rev{\qty((\Delta x)^\eta + (\Delta t)^\frac{\eta}{2})}.
    \end{split}
  \end{align}
  This ends the proof.
\end{proof}

\begin{proof}[Proof of Theorem \ref{thm:convergence-repeat-limit}]
  Let $(m,v)$ be the solution to \eqref{eq:LQHJB}--\eqref{eq:LQFP_initial},
  $(m^{(k)},v^{(k)})$ be the solution to \eqref{eq:fp_LQHJB}--\eqref{eq:fp_LQFP_initial}, $(\psi^{(k)},\tilde\psi^{(k)})$ be the solution to \eqref{eq:ADR-fp-non-conservative}--\eqref{eq:ADR-fp-conservative_initial}, 
  and $(\Psi^{(k)},\tilde\Psi^{(k)})$ be the solution to \eqref{eq:upwind_ADR}--\eqref{eq:upwind_ADR_conservative_initial}.
  Let $\epsilon>0$.
  We first observe
  \begin{align}
    \begin{split}      
      \left|m(x_i,t_j)-\Psi_{i,j}^{(k)}\tilde\Psi_{i,j}^{(k)}\right|
      &\le \qty|m(x_i,t_j) - m^{(k)}(x_i,t_j) | +\qty| m^{(k)}(x_i,t_j) - \Psi_{i,j}^{(k)}\tilde\Psi_{i,j}^{(k)}|\\
      &\le \qty|m(x_i,t_j) - m^{(k)}(x_i,t_j) | + \rev{\bar K(k)}\rev{\qty((\Delta x)^\eta + (\Delta t)^\frac{\eta}{2})}.
    \end{split}
  \end{align}  
  Using Proposition \ref{prop:PDE-existence-uniform-fp-convergence}, there exists $k'\in \bbN$ such that 
  \begin{align}\label{eq:m_convergence1}
    \sup_{i,j} \qty|m(x_i,t_j) - m^{(k)}(x_i,t_j) |< \frac{\epsilon}{2},
  \end{align}
  for $k\ge k'$.
  Therefore, for $k\ge k'$, 
  \begin{align}
    \left|m(x_i,t_j)-\Psi_{i,j}^{(k)}\tilde\Psi_{i,j}^{(k)}\right| \le \frac{\epsilon}{2} + \rev{\bar K(k)}\rev{\qty((\Delta x)^\eta + (\Delta t)^\frac{\eta}{2})},
  \end{align}
  which implies \eqref{eq:m_convergence_repeat}.
  To prove \eqref{eq:v_convergence_repeat}, we observe 
  \begin{align}
    \begin{split}
      \left|v(x_i,t_j)-\qty(-\lambda \ln \Psi_{i,j}^{(k)}) \right| 
      &\le \qty|v(x_i,t_j)- v^{(k)}(x_i,t_j)| 
      + \qty| \qty(-\lambda \ln \psi^{(k)}(x_i,t_j)) -\qty(-\lambda \ln \Psi_{i,j}^{(k)}) |.
    \end{split}
  \end{align}
  By using Proposition \ref{prop:PDE-existence-uniform-fp-convergence}, we have 
  \begin{align}\label{eq:v_convergence1}
    \sup_{i,j} \qty|v(x_i,t_j) - v^{(k)}(x_i,t_j) |< \frac{\epsilon}{2},
  \end{align}
  for $k\ge k''$ with some $k''\in\bbN$.
  On the other hand, we have the following bound:
  \begin{align}
    \begin{split}
      \qty|\lambda \ln \psi^{(k)}(x_i,t_j) - \lambda \ln \Psi_{i,j}^{(k)}| 
      &= \lambda \qty|\ln\frac{\psi^{(k)}(x_i,t_j)}{\Psi_{i,j}^{(k)}}|\\
      &= \lambda \qty|\ln\qty{ \frac{\psi^{(k)}(x_i,t_j) - \Psi_{i,j}^{(k)}}{\Psi_{i,j}^{(k)}} + 1}|\\
      &\le \lambda \ln\qty{ \qty|\frac{\psi^{(k)}(x_i,t_j) - \Psi_{i,j}^{(k)}}{\Psi_{i,j}^{(k)}}| + 1}\\
      &\le \lambda \ln\qty{ \frac{\qty|\psi^{(k)}(x_i,t_j) - \Psi_{i,j}^{(k)}|}{\Gamma_{\min}} + 1}.
    \end{split}
  \end{align}
  Therefore, we deduce
  \begin{align}
    \left|v(x_i,t_j)-\qty(-\lambda \ln \Psi_{i,j}^{(k)}) \right| \le \frac{\epsilon}{2} + \lambda \ln\qty{ \frac{\qty|\psi^{(k)}(x_i,t_j) - \Psi_{i,j}^{(k)}|}{\Gamma_{\min}} + 1},
  \end{align}
  for $k\ge k''$.
  This completes the proof of \eqref{eq:v_convergence_repeat}.
\end{proof}

\section{Numerical experiments}\label{sec:numeric}

In this section, we conduct numerical experiments using the scheme proposed in Section \ref{sec:scheme}.
Although the convergence analysis we have performed is only for the one-dimensional case, we can numerically verify the results in both one- and two-dimensional cases.

\begin{example}  
  First, consider the one-dimensional interval $\Omega=[-1,1]$ with a periodic boundary condition, and consider the temporal parameter $T=1$.
  The functions in the LS-MFG equations \eqref{eq:LQHJB}--\eqref{eq:LQFP_initial} are set as follows:
  \begin{align}
    \begin{split}
      f(x) &= A x,\  g(x,m) = Q x^2 + \rev{L_1  \min\{m(x), L_2\}},\\
      \sigma &= 0.1,\  B = 1.0, \ R= 1.0,\\
      v_T(x) &= 0.0,\  m_0(x) = \frac{1}{\sqrt{2\pi \nu_0}}\exp\left(-\frac{x^2}{2 \nu_0}\right),
    \end{split}
  \end{align}
  where $A$, $Q$, \rev{$L_1$, $L_2$} and $\nu_0$ are scalar parameters.
  We consider the case with $A=1$, $Q=5$, \rev{$L_1\in\{0,0.1\}$, $L_2=10$}, and $\nu_0 =0.2$.
  These parameter settings imply the following scenarios.
  First, since $A>0$, the density distribution diverges around $x=0.0$ when the system has zero input.
  To prevent this, we would like to stabilize the system at the origin by setting $Q>0$.
  On the other hand, we also try to achieve congestion mitigation by setting $L_1>0$.
  \rev{Note that the function $g$ does not satisfy the regularity assumption made in Assumption \ref{assump:PDE-existence-uniform-fp-convergence}.
  Although defining the function as a convolution with a mollifier would guarantee regularity, we adopt this local coupling for convenience in conducting numerical experiments and for practicality in the control problem.
  Since the previous study has shown the convergence of fixed point iteration in MFG with local coupling~\cite{Gueant2012Mean}, we expect that the convergence of fictitious play may hold under weaker assumptions.}
  We define the discrete parameters as $\Delta x= 0.01$ and $\Delta t = 0.0025$.
  Then the CFL condition \eqref{eq:CFL} is satisfied:
  \begin{align}
    \alpha \|f\|_\infty + 2 \beta \nu = 0.5 < 1.
  \end{align}
  
  \begin{figure}[ht]
    \begin{subfigure}{0.49\textwidth}
      \includegraphics[width=\linewidth]{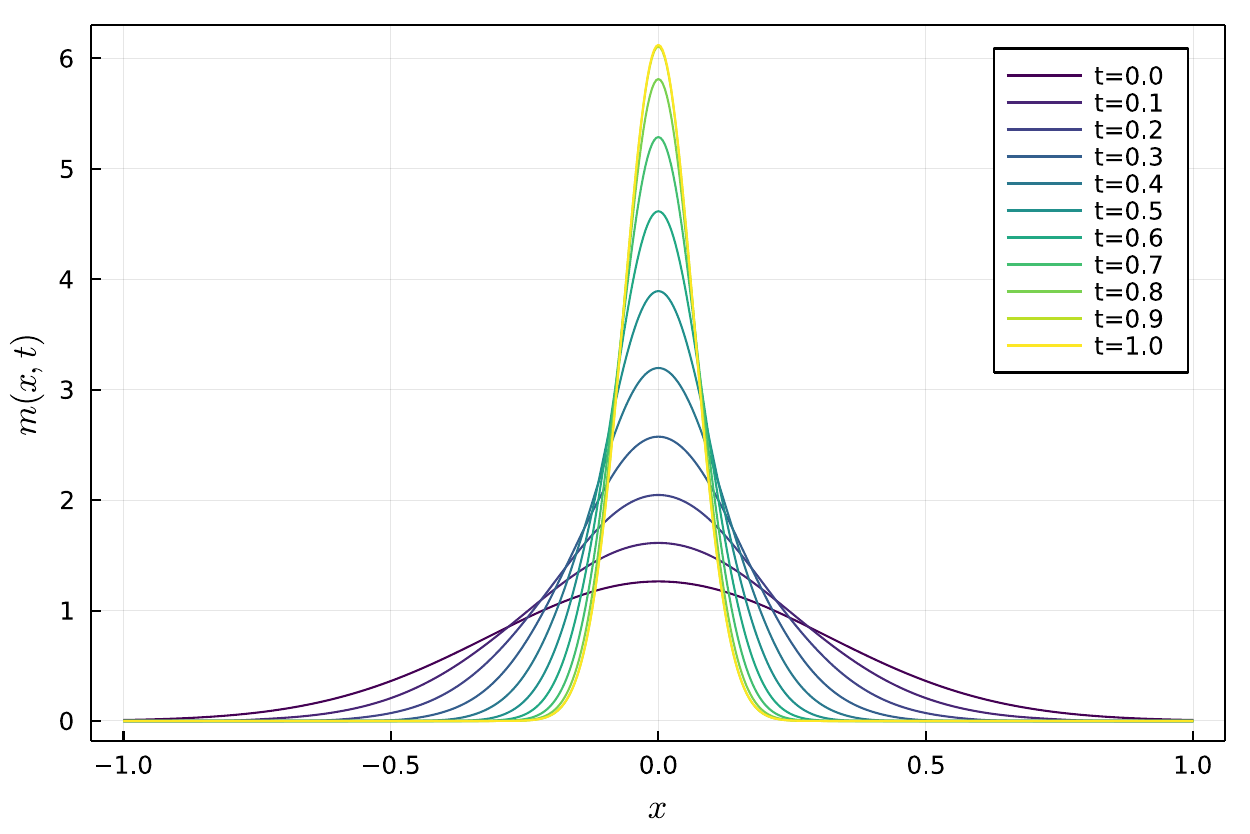}
      \caption{Density $m$.}
      \label{fig:test2-1m}
    \end{subfigure}
    \begin{subfigure}{0.49\textwidth}
      \includegraphics[width=\linewidth]{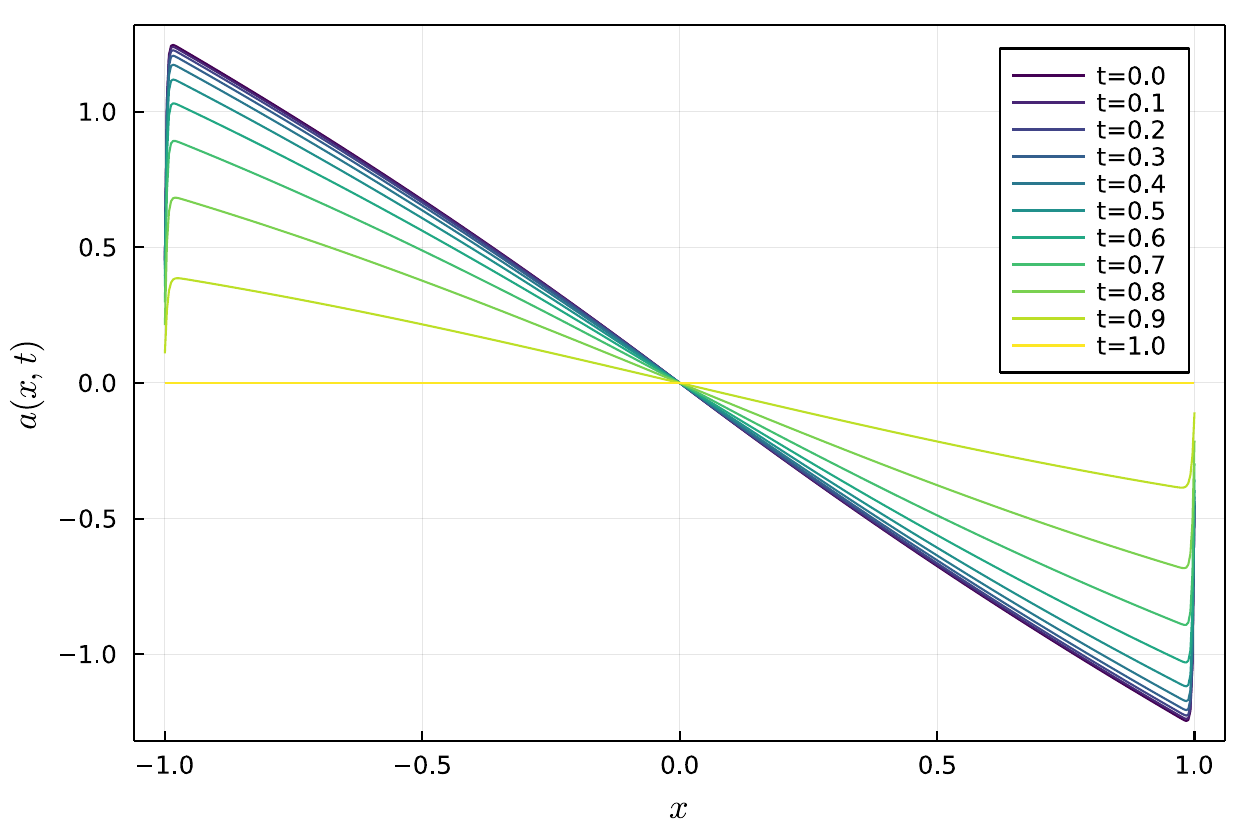}
      \caption{Input $a$.}
      \label{fig:test2-1u}
    \end{subfigure}
    \caption{Time evolution of the density $m$ and the input $a$ when $k=1000$ and \rev{$L_1=0$}.}
    \label{fig:test2-1}
  \end{figure}

  \begin{figure}[ht]
    \begin{subfigure}{0.49\textwidth}
      \includegraphics[width=\linewidth]{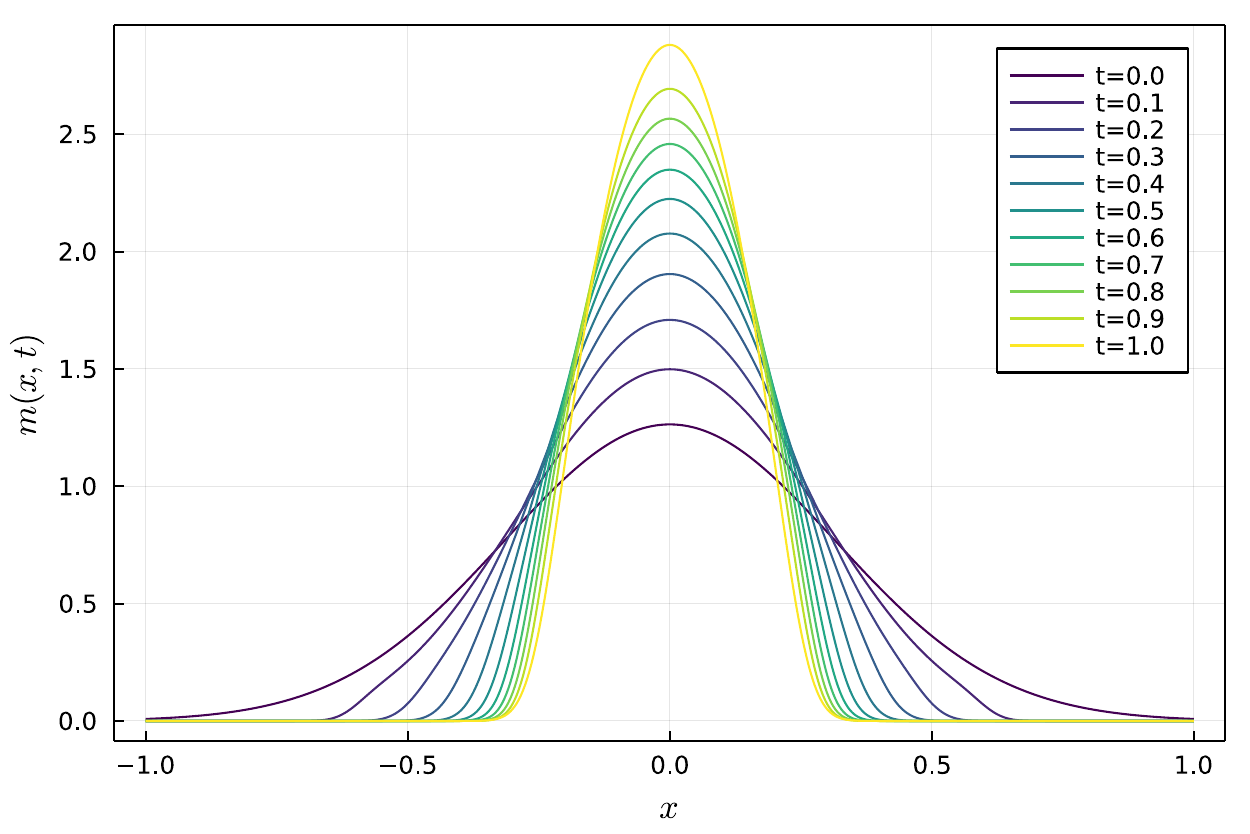}
      \caption{Density $m$.}
      \label{fig:test2-2m}
    \end{subfigure}
    \begin{subfigure}{0.49\textwidth}
      \includegraphics[width=\linewidth]{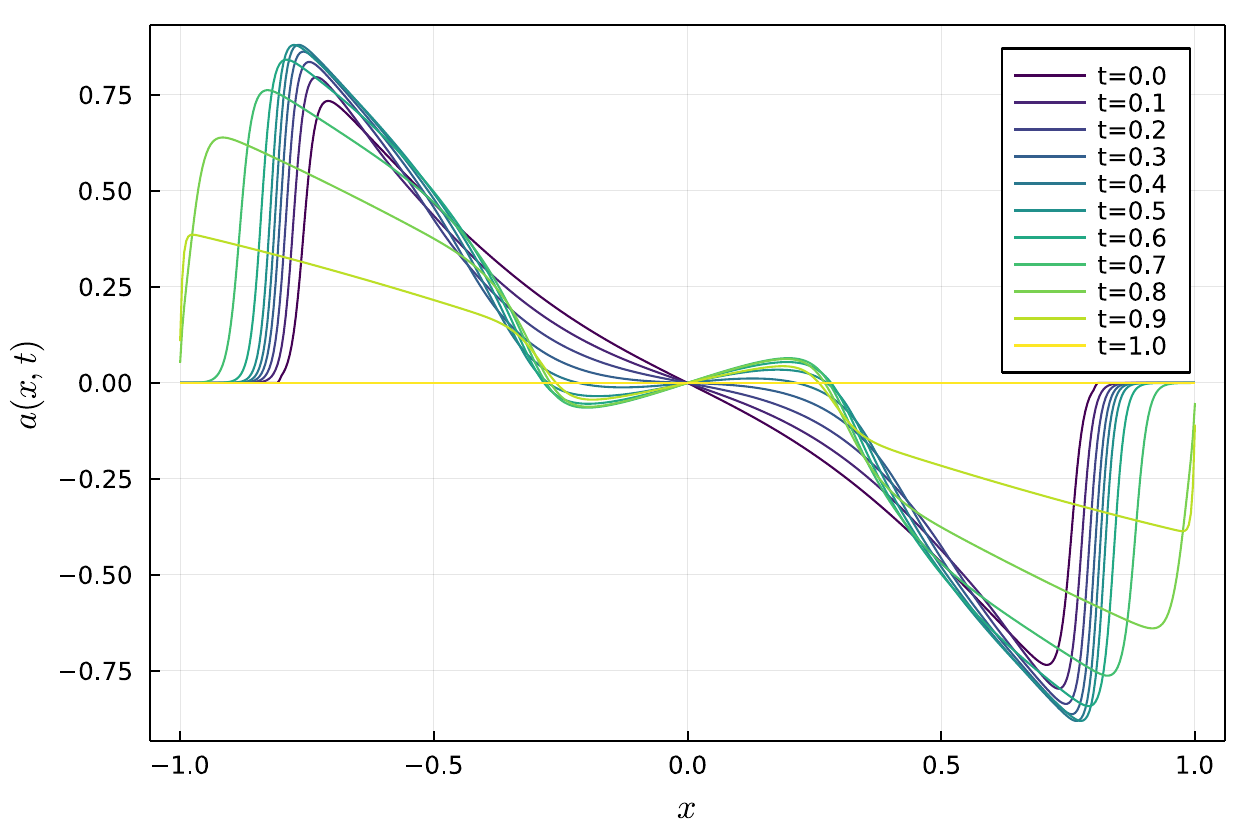}
      \caption{Input $a$.}
      \label{fig:test2-2u}
    \end{subfigure}
    \caption{Time evolution of the density $m$ and the input $a$ when $k=1000$ and \rev{$L_1=0.1$}.}
    \label{fig:test2-2}
  \end{figure}
  
  Figures~\ref{fig:test2-1}--\ref{fig:test2-2} show the time response of the density function $m^{(k)}$ and the input function $a^{(k)}$ at the end of the $k=1000$th iteration of fictitious play.
  Figure~\ref{fig:test2-1} shows the plot when \rev{$L_1=0$}.
  As a result of stabilizing the unstable system, the density is concentrated near $x=0$.
  To avoid this, we set \rev{$L_1=0.1$}; the result is shown in Fig.~\ref{fig:test2-2}.
  Generating an attractive input at the far origin stabilizes the system near the origin, whereas generating a repulsive input near the origin suppresses the peak of the density distribution.

  We here examine the complexity and convergence speed of the proposed scheme.
  First, we measure the CPU execution time required for the norm $\|M^{(k)}-M^{(k-1)}\|_\infty$ to be less than $5.0\times10^{-7}$.
  \begin{figure}[t]
    \centering
    \includegraphics[width=0.7\linewidth]{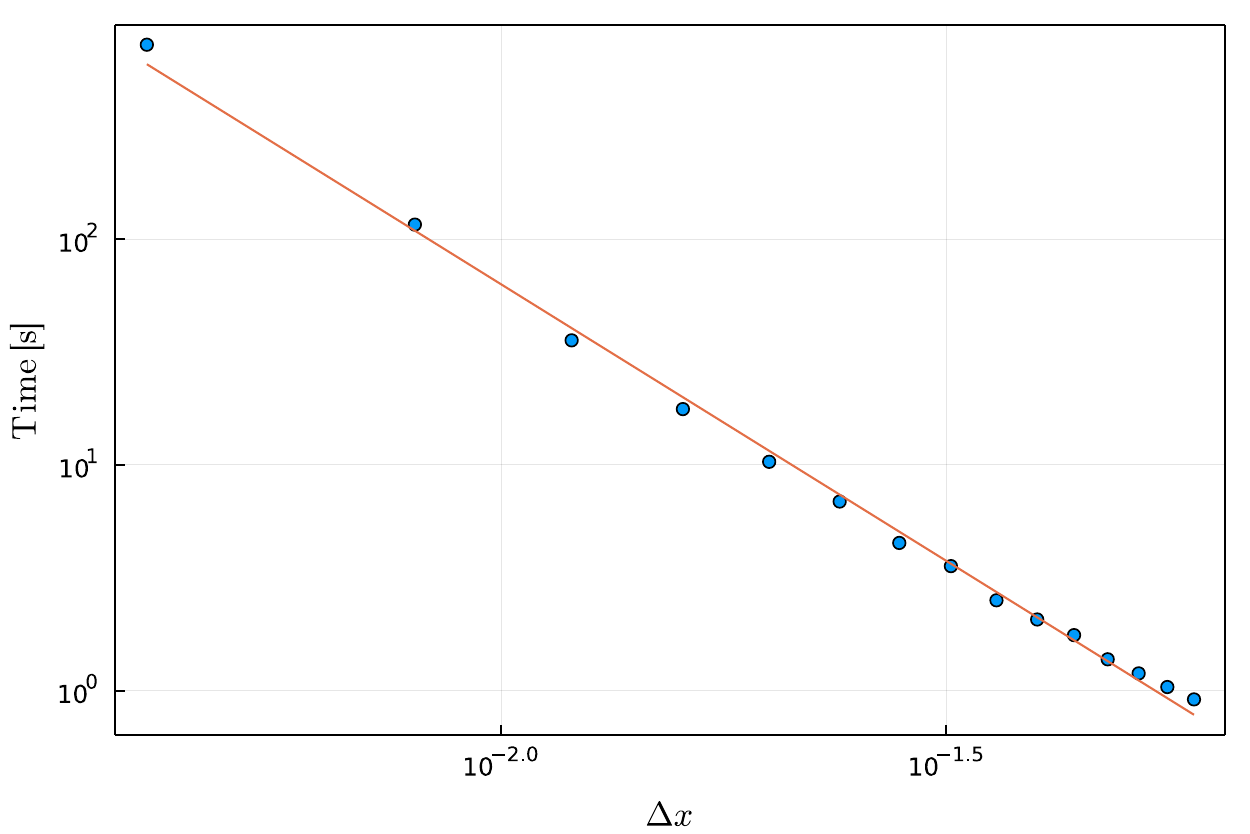}
    \caption{
      Execution time until $\|M^{(k)}-M^{(k-1)}\|_\infty<5.0\times10^{-7}$ is satisfied for various $\Delta x$.
    }
    \label{fig:sweep_time_1d}
  \end{figure}
  The results of calculating the time for various $\Delta x$ are plotted in Fig.~\ref{fig:sweep_time_1d}.
  The parameter $\Delta t$ is set so that the left-hand side of the CFL condition \eqref{eq:CFL} becomes $0.5$:
  \begin{align}\label{eq:numerics_delta_t_1d}
    % \Delta t = 0.5 \left( \frac{\sup_{x\in\Omega} |f(x)|}{\Delta x} + \frac{\sigma^2}{\Delta x^2} \right)^{-1}.
    \Delta t = 0.5 \left( \frac{\|f\|_\infty}{\Delta x} + \frac{\sigma^2}{\Delta x^2} \right)^{-1}.
  \end{align}
  Note that we have $\Delta t\sim \Delta x$ since $\sigma$ is small compared to $f$.
  In Fig.~\ref{fig:sweep_time_1d}, the execution time becomes larger as the parameter $\Delta x$ becomes smaller.
  The order is found to be approximately $O(\Delta x^{-2.45})$.

  \rev{We compare this computation time with the existing studies.
  Reference~[13], which proposes a similar iterative method to ours, evaluates the time to solve a one-dimensional control problem. 
  They conclude that the time taken for the solution to converge is $O((\Delta x\Delta t)^{-1})$. 
  This implies $O(\Delta x^{-2})$ since their method requires $\Delta x\sim \Delta t$.
  From these facts, we believe that the proposed method is comparable to the existing method for computation time.}

  Next, we clarify the convergence speed.
  As confirmed from the proof of Theorem \ref{thm:convergence-repeat-limit}, the error between the solution $m$ of the MFG and the solution $M^{(k)}$ of the scheme is bounded as follows:
  \begin{align}\label{eq:m_bound_abst}
    \|m-M^{(k)}\|_\infty \le K(k) \rev{\qty((\Delta x)^\eta + (\Delta t)^\frac{\eta}{2})} + L(k),
  \end{align}
  where the function $K(k)$ on the right-hand side is monotonically increasing with respect to $k$ and the function $L(k)$ satisfies $L(k) \searrow 0$ as $k\to\infty$.
  Note that the function $L(k)$ describes the convergence speed of fictitious play.  
  Since the true solution $m$ is unknown, we instead define the reference solution $\tilde M$, which is obtained by our scheme with sufficiently small $\Delta x$, $\Delta t$, and a sufficiently large $k$.
  We then calculate the relative error $\|\tilde M-M^{(k)}(\Delta x, \Delta t)\|_\infty$, where $M^{(k)}(\Delta x, \Delta t)$ denotes the solution with parameters $\Delta x, \Delta t, k$.
  We specifically define the reference solution $\tilde M$ with $\Delta x = 0.004$ and $k=1000$.

  \begin{figure}[t]
    \centering
    \includegraphics[width=0.72\linewidth]{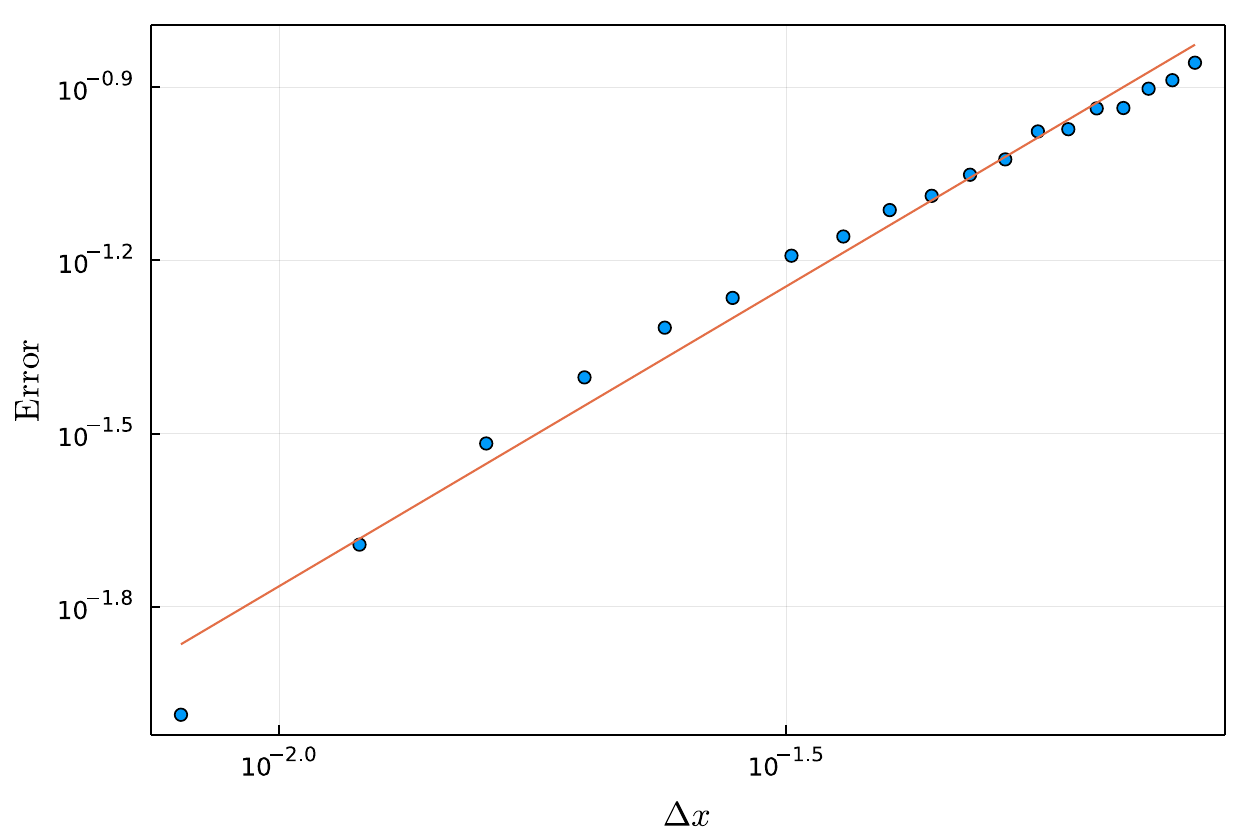}
    \caption{
      The relative error $\|\tilde M-M^{(k)}(\Delta x, \Delta t)\|_\infty$ between the solution $M^{(k)}(\Delta x, \Delta t)$ and the reference solution $\tilde M$ for various $\Delta x$.
    }
    \label{fig:sweep_dx_1d}
  \end{figure}
  First, we fix the parameter $k$ as $k=1000$ and plot the relative error for various $\Delta x$ in Fig.~\ref{fig:sweep_dx_1d}.
  The parameter $\Delta t$ is determined based on \eqref{eq:numerics_delta_t_1d}.
  A smaller parameter $\Delta x$ results in a smaller error.
  The order is found to be $O(\Delta x^{1.04})$, which supports the results in Theorem~\ref{thm:convergence-repeat}.
  \begin{figure}[t]
    \centering
    \includegraphics[width=0.7\linewidth]{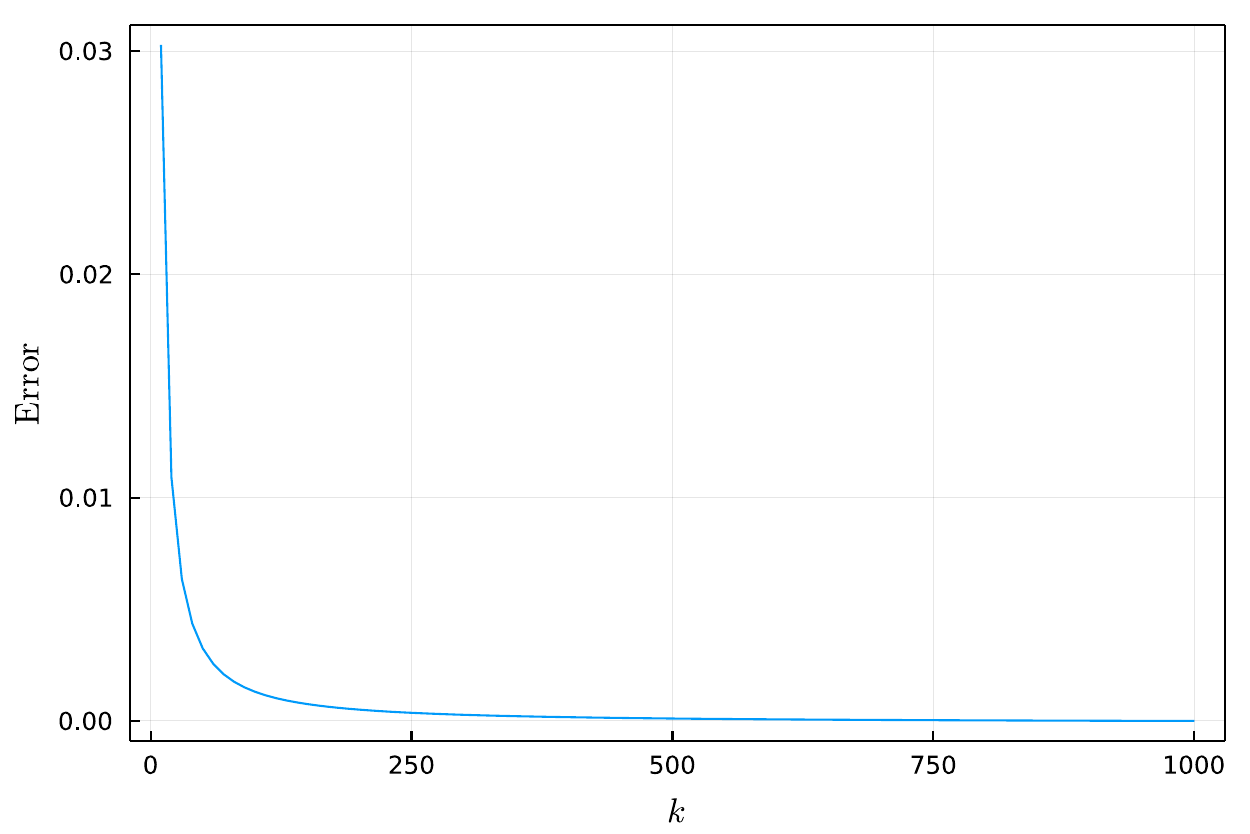}
    \caption{
      The relative error $\|\tilde M-M^{(k)}(\Delta x, \Delta t)\|_\infty$ between the solution $M^{(k)}(\Delta x, \Delta t)$ and the reference solution $\tilde M$ for various $k$.
    }
    \label{fig:sweep_k_1d}
  \end{figure}
  Next, we fix the parameter $\Delta x$ as $\Delta x=0.004$, determine $\Delta t$ based on \eqref{eq:numerics_delta_t_1d}, and calculate the relative error for various $k$. 
  The first term on the right-hand side of \eqref{eq:m_bound_abst} is small enough so that we expect to estimate the form of the function $L(k)$.
  The results are plotted in Fig.~\ref{fig:sweep_k_1d}.
  A smaller parameter $k$ results in an exponentially smaller error.
\end{example}
  
\begin{example}
  We consider the two-dimensional interval $\Omega=[-1,1]^2$ with a periodic boundary condition and consider the temporal parameter $T=1$.
  We use the following functions:
  \begin{align}
    f(x) &= \rev{Ax}, \\
    \begin{split}      
      g(x,m) &= (x-x_T)^\top Q (x-x_T) + S\exp(-(x-x_O)^\top\Sigma_O^{-1}(x-x_O))\\
      &\quad + \rev{L_1  \min\{m(x), L_2\}},
    \end{split}\\
    % \sigma &= 0.5, \ B = \mathrm{diag}(1,1),\ R = \mathrm{diag}(1,1),\\ 
    \sigma &= \mathrm{diag}(0.5,0.5), \ B = \mathrm{diag}(1,1),\ R = \mathrm{diag}(1,1),\\ 
    v_T(x) &= (x-x_T)^\top Q_T (x-x_T) + S_T\exp(-(x-x_O)^\top\Sigma_O^{-1}(x-x_O)),\\
    m_0(x) &\sim  \exp(-(x-x_I)^\top\Sigma_{I}^{-1}(x-x_I)),
  \end{align}
  where $x_T, x_I, x_O$ denote the destination, average initial position, and location of an obstacle, respectively.
  \rev{The matrix $A\in\bbR^{2\times 2}$ describes the dynamics of the system.}
  The parameters $Q$, $Q_T$, $\Sigma_I$, $\Sigma_O\in\bbR^{2\times 2}$ are positive-definitive matrices, and $S$, $S_T$, \rev{$L_1$, $L_2$}$\in\bbR$ are positive scalar parameters.
  We use \rev{$A = \mathrm{diag}(-0.05,-0.05)$}, $Q=\mathrm{diag}(1,1)$, $Q_T=\mathrm{diag}(0.8,0.8)$, $\Sigma_I=\mathrm{diag}(0.02, 0.02)$, $\Sigma_O=\mathrm{diag}(0.01, 0.01)$, $S=0.2$, $S_T=0.2$, \rev{$L_1\in\{0, 0.2\}$ and $L_2=10$} for the values for these parameters.
  We set $x_T=[0.5, 0.5]^\top$, $x_I=[-0.4, -0.4]^\top$, and $x_O=[-0.1, -0.1]^\top$.
  Reducing the above evaluation function means that each microscopic system tries to achieve the following four goals: 1) keep its own input as small as possible, 2) keep the position $x$ close to the destination $x_T$, 3) avoid the obstacle located at $x_O$ as much as possible, and 4) avoid congestion of the density. 
  \rev{Note that $A\preceq 0$ corresponds to a situation where the field is bowl-shaped and each player rolls down toward $x=0$ when there is no control input.}
  
  We define the discrete parameters as $\Delta x_1 = \Delta x_2 = 0.05$ and $\Delta t = 0.004$, so that the CFL condition 
  \begin{align}
    \Delta t \sum_{l\in\{1,2\}} \left( \frac{\|f_l\|_\infty }{\Delta x_l} + 2 \frac{ \nu_{ll} }{\Delta x_l^2}\right) = \rev{0.88} < 1,
  \end{align}
  is satisfied.

  \begin{figure}[t]
    \centering
    \includegraphics[width=0.9\linewidth]{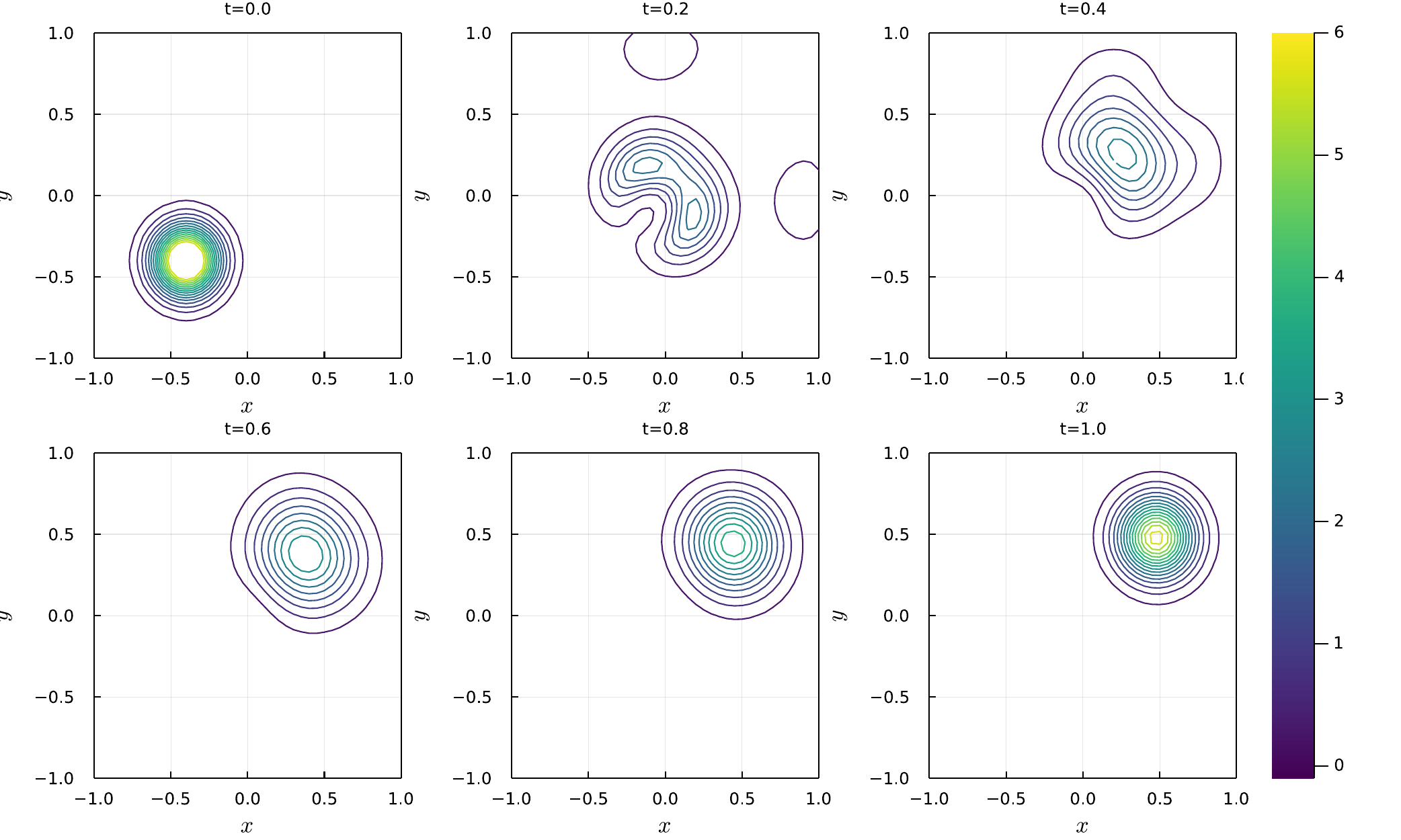}
    \caption{
    Trajectory of the density function $m^{(k)}$ when $k=500$ and \rev{$L_1=0$}. }
    \label{fig:time-evolution_0}
  \end{figure}
  
  \begin{figure}[t]
    \centering
    \includegraphics[width=0.9\linewidth]{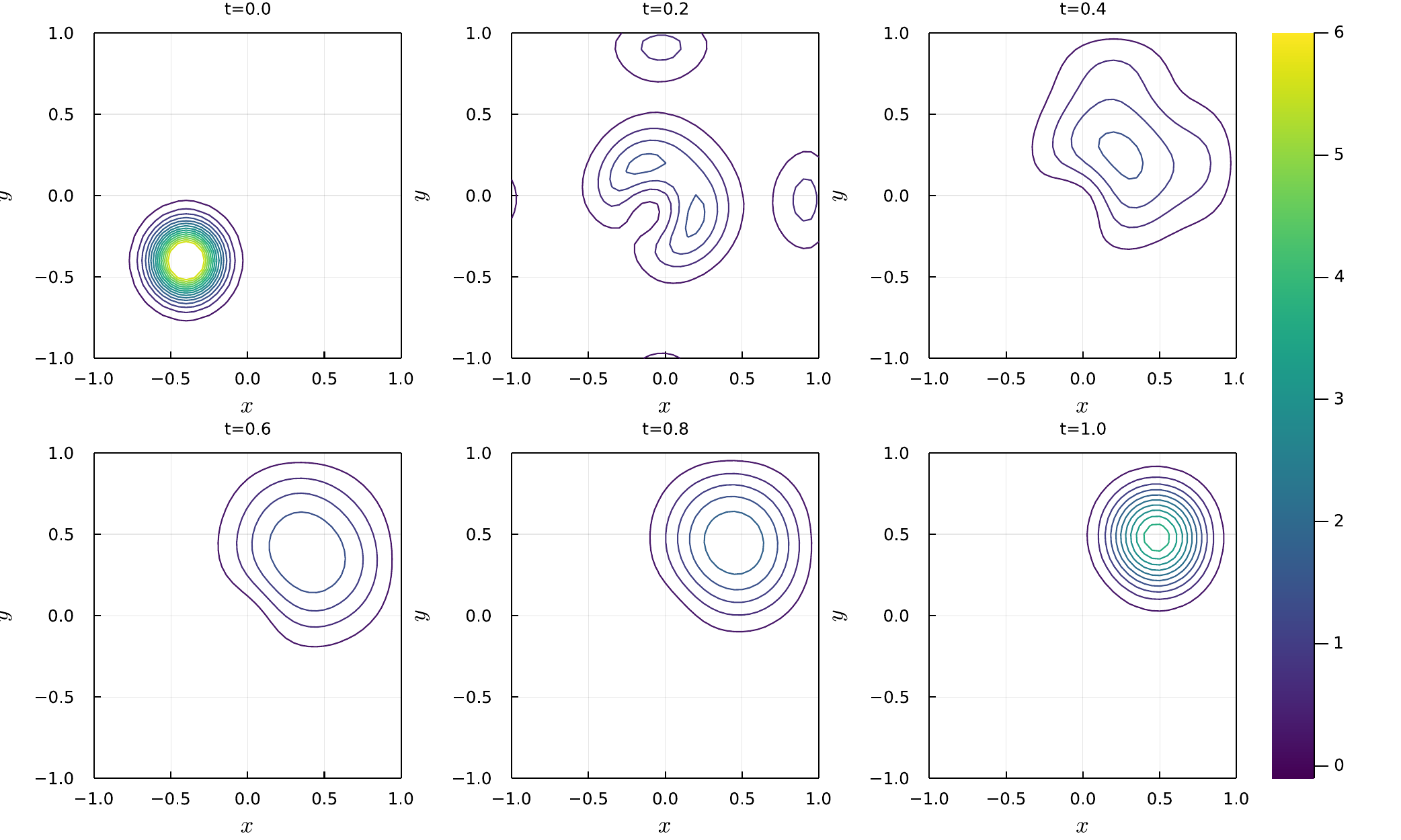}
    \caption{
    Trajectory of the density function $m^{(k)}$ when $k=500$ and \rev{$L_1=0.2$}. }
    \label{fig:time-evolution_1}
  \end{figure}

  Figures~\ref{fig:time-evolution_0} and \ref{fig:time-evolution_1} show the density profile after the $k=500th$ iteration of fictitious play.
  Figure~\ref{fig:time-evolution_0} shows the time response of the density function $m^{(k)}$ when \rev{$L_1=0$}.
  We see that each microscopic system moves toward the target coordinate $x=x_T$ while avoiding the area of $x=x_O$.
  Congestion occurs, however, especially near the end time $t=1$.
  On the other hand, Fig.~\ref{fig:time-evolution_1} shows the trajectory of the density when \rev{$L_1=0.2$}.
  Congestion is suppressed by setting the parameters to encourage the microscopic system to avoid the high-density area.

  We can now investigate the complexity and convergence speed of the proposed scheme.
  \begin{figure}[t]
    \centering
    \includegraphics[width=0.7\linewidth]{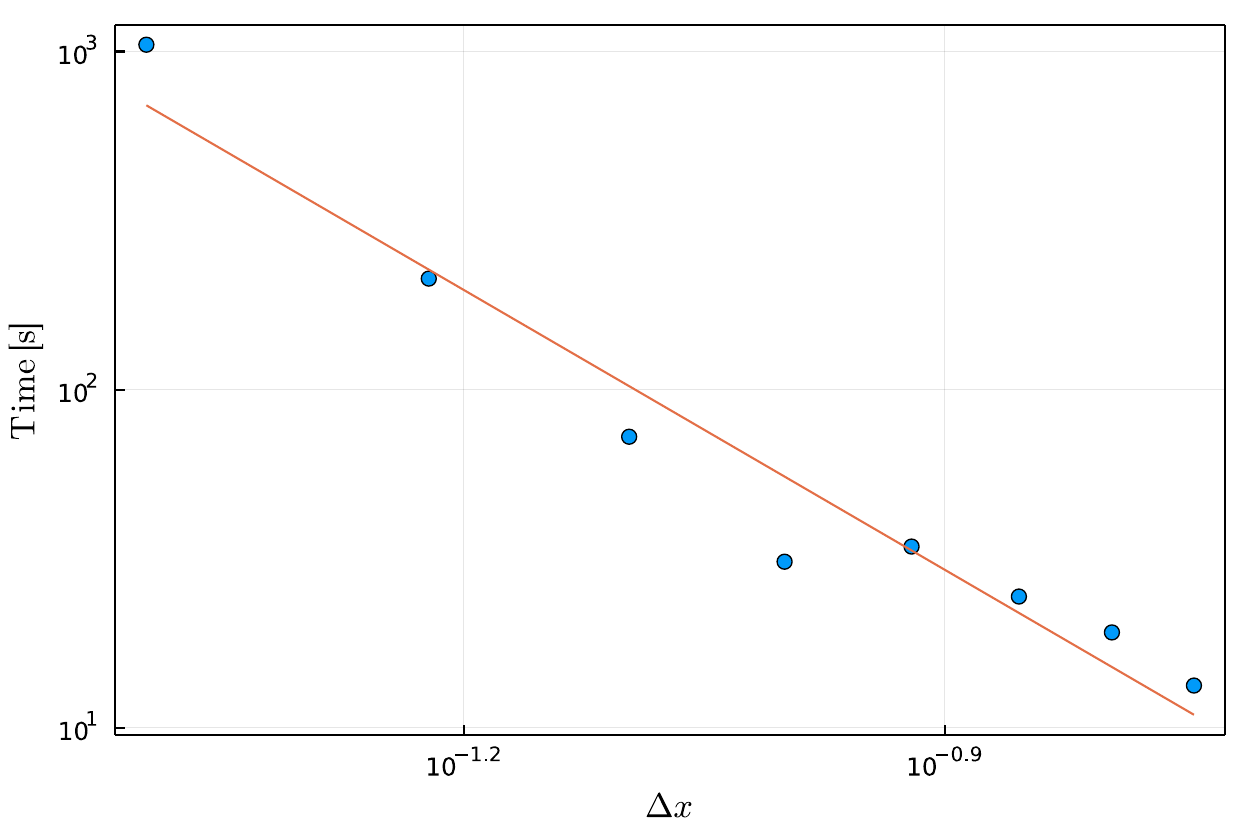}
    \caption{
      Execution time until $\|M^{(k)}-M^{(k-1)}\|_\infty<1.0\times10^{-4}$ is satisfied for various $\Delta x$.
    }
    \label{fig:sweep_time_2d}
  \end{figure}
  The CPU execution time required for the norm $\|M^{(k)}-M^{(k-1)}\|_\infty$ to be less than $1.0\times10^{-4}$ is plotted in Fig.~\ref{fig:sweep_time_2d}.
  Here, we use the same parameter $\Delta x_1=\Delta x_2\eqqcolon\Delta x$ and set $\Delta t$ so that the left-hand side of the CFL condition becomes $0.8$:
  \begin{align}\label{eq:numerics_delta_t_2d}
    \Delta t = 0.8\left\{ \sum_{l\in\{1,2\}} \left( \frac{\|f_l\|_\infty }{\Delta x} + 2 \frac{ \nu_{ll} }{\Delta x^2}\right)\right\}^{-1}.
  \end{align}
  Note that we have $\Delta t\sim (\Delta x)^2$, since $\sigma$ is large compared to $f$.
  In Fig.~\ref{fig:sweep_time_2d}, the execution time becomes larger as the parameter $\Delta x$ becomes smaller.
  The order is found to be \rev{$O(\Delta x^{-2.76})$}.
 
  Next, we check the convergence speed.
  We define the reference solution $\tilde M$ with the parameters $\Delta x = 0.04$ and $k=500$.
  We then calculate the relative error $\|\tilde M-M^{(k)}(\Delta x, \Delta t)\|_\infty$, where $M^{(k)}(\Delta x, \Delta t)$ denotes the solution with parameters $\Delta x, \Delta t, k$.
  \begin{figure}[t]
    \centering
    \includegraphics[width=0.72\linewidth]{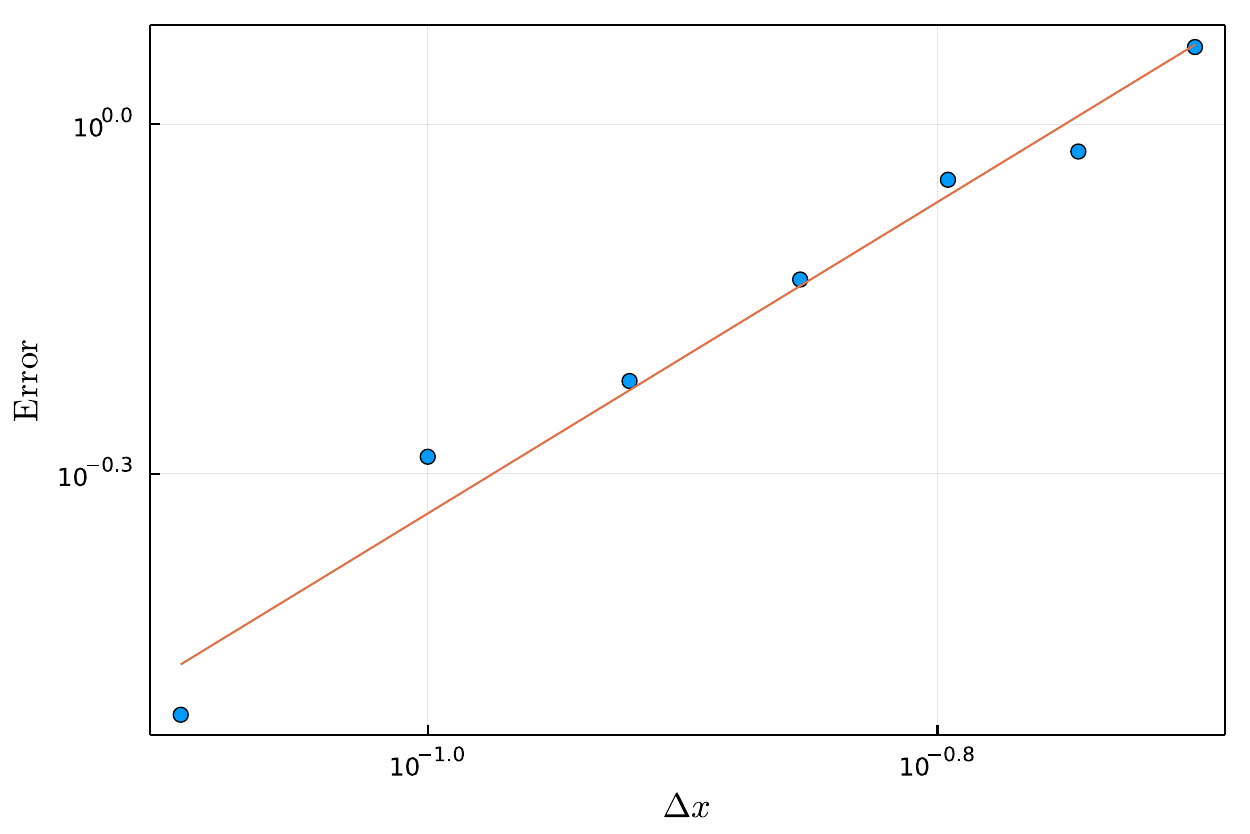}
    \caption{
      The relative error $\|\tilde M-M^{(k)}(\Delta x, \Delta t)\|_\infty$ between the solution $M^{(k)}(\Delta x, \Delta t)$ and the reference solution $\tilde M$ for various $\Delta x$.
    }
    \label{fig:sweep_dx_2d}
  \end{figure}
  First, we fix the parameter $k$ as $k=500$ and plot the relative error for various $\Delta x$ in Fig.~\ref{fig:sweep_dx_2d}.
  The parameter $\Delta t$ is determined based on \eqref{eq:numerics_delta_t_2d}.
  As the parameter $\Delta x$ becomes smaller, the error becomes smaller.
  The order is found to be approximately \rev{$O(\Delta x^{1.33})$}.
  \begin{figure}[t]
    \centering
    \includegraphics[width=0.7\linewidth]{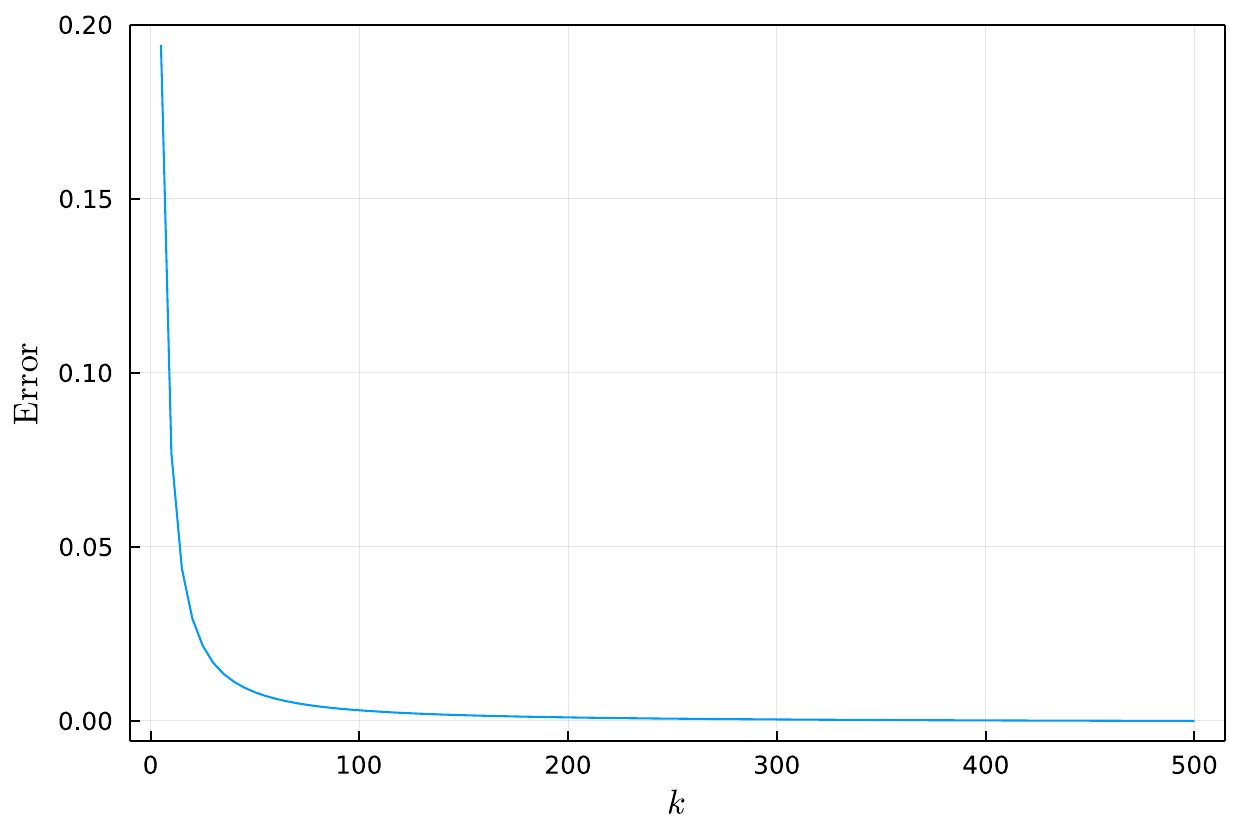}
    \caption{
      The relative error $\|\tilde M-M^{(k)}(\Delta x, \Delta t)\|_\infty$ between the solution $M^{(k)}(\Delta x, \Delta t)$ and the reference solution $\tilde M$ for various $k$.
    }
    \label{fig:sweep_k_2d}
  \end{figure}
  Next, we fix the parameter $\Delta x$ as $\Delta x=0.04$, determine $\Delta t$ based on \eqref{eq:numerics_delta_t_2d}, and calculate the relative error for various $k$. 
  The results are plotted in Fig.~\ref{fig:sweep_k_2d}.
  We find that the error is exponentially smaller as the parameter $k$ is larger, which is the same result as in the previous example.
\end{example}

\section{Conclusion}\label{sec:conclusion}
  
In this paper, we derived linearly solvable mean field games (LS-MFGs) by using the Cole-Hopf transformation for mean field games.
We proposed a numerical scheme for the LS-MFG in which the forward equation and the backward equation are solved alternately with fictitious play iterations.
We also proved that the obtained scheme has suitable convergence properties.
Specifically, we first showed that each of the forward and backward equations satisfies the discrete maximum principle (in Propositions \ref{prop:bound_Psi}, \ref{prop:bound_tildePsi}) and converges (in Propositions~\ref{thm:convergence}, \ref{thm:convergence-conservative}), and then showed convergence when the two equations are solved iteratively (in Theorems~\ref{thm:convergence-repeat}, \ref{thm:convergence-repeat-limit}).

Proof for the case of more than two dimensions is expected to be easy but is left as an issue for the future.
\rev{In addition, convergence analysis for the case of weakening the regularity assumption for the coupling term should be listed as an open problem.}
\rerev{Furthermore, convergence analysis when generalizing fictitious play remains a topic of important future work.}

\section*{Acknowledgement}
The authors thank the anonymous reviewers for their valuable comments and suggestions to improve the quality of the paper.

\section*{Appendix}
\renewcommand{\theequation}{A.\arabic{equation}}

\subsection*{Proof of Proposition \ref{prop:CH-backward-bound}}

The boundedness is obvious because of the existence of solutions to the HJB equation \eqref{eq:LQHJB} and the definition of the Cole-Hopf transformation \eqref{eq:CH-backward}.
We show that there exists $\gamma_{\min}>0$ such that $\psi\ge \gamma_{\min}\ \forall (x,t)$.
Setting $\phi(x,t) = \exp(Ut)\psi(x,t)$ with $U\ge0$, we transform \eqref{eq:CH-backward} and \eqref{eq:CH-backward-0} into
\begin{align}
    \partial_t \phi(x ,t) &= -  f(x)^\top\partial_x \phi(x,t) - \Tr \{\nu \partial_{xx}\phi(x,t)\} + \qty{\frac{1}{\lambda}\rev{g(x,m(\cdot,t))} + U}\phi(x,t)\qquad (x,t)\in\Omega\times [0,T],\\
    \phi(x,T) &= \exp(UT-\frac{v_T(x)}{\lambda}) \qquad x\in\Omega.
\end{align}
Let $\underline\phi(t)$ be a solution of the following differential equation:
\begin{align}
  \drv{}{t}\underline\phi(t) &= \qty(U + \frac{1}{\lambda}\|g\|_{\infty} ) \underline{\phi}(t),\\
  \underline\phi(T) &= \exp(UT-\frac{\|v_T\|_{\infty}}{\lambda}).
\end{align}
This variable $\underline\phi(t)$ satisfies
\begin{align}
  \underline\phi(t) &\ge \exp(-\frac{1}{\lambda} (\|v_T\|_{\infty} + \|g\|_{\infty}T) )\eqqcolon\bar\gamma_{\min}.
\end{align}
We define $K(t)\coloneqq \frac12 \int_\Omega (\underline\phi(t) - \phi(x,t) )_+^2\rmd x$.
\rerev{
  Then, 
  \begin{align}
    \frac{\rmd}{\rmd t} K(t) = K_1 + K_2 + K_3 + K_4 + K_5,
  \end{align}  
  where
  \begin{align}
    \begin{aligned}
    K_1 & =\int \frac{1}{\lambda}(\|g\|_\infty-g(x,m))(\phi(x,t)-\underline{\phi}(t))(\underline{\phi}(t)-\phi(x,t))_{+} \rmd x, \\
    K_2 & =\int \frac{1}{\lambda}(\|g\|_\infty-g(x,m)) \underline{\phi}(t)(\underline{\phi}(t)-\phi(x,t))_{+} \rmd x, \\
    K_3 & =\int\left(-f(x)^{\top} \partial_x(\underline{\phi}(t)-\phi(x,t))+U(\underline{\phi}(t)-\phi(x,t))\right)(\underline{\phi}(t)-\phi(x,t))_{+} \rmd x, \\
    K_4 & =\int \operatorname{Tr}\left(\nu \partial_{x x} \phi(x,t)\right)(\underline{\phi}(t)-\phi(x,t))_{+} \rmd x, \\
    K_5 &= \int \frac{1}{\lambda}\|g\|_{\infty}(\underline\phi(t)-\phi(x, t))(\underline\phi(t)-\phi(x, t))_+ \rmd x.
    \end{aligned}
  \end{align}
  Note first that $K_2$ and $K_5$ are non-negative.
  Using partial integration, it is shown that $K_4$ is also non-negative.
  Applying next Young's inequality and choosing
  \begin{align}
    U=C \|f\|_\infty+\frac{2}{\lambda}\|g\|_\infty,
  \end{align}
  for sufficiently large $C>0$, we obtain that $K_1+K_3$ is also non-negative. 
  Hence, we obtain $\drv{}{t} K(t)\ge 0$.
  This and $K(T)=0$ shows $K(t)\equiv0$, which means $\phi\ge \bar\gamma_{\min}>0$.
  Taking $\gamma_{\min} \coloneqq \bar\gamma_{\min}/\exp(UT)>0$, we have $\psi\ge \gamma_{\min}$, which completes the proof.
}

\subsection*{Proof of Proposition \ref{prop:d1-sup}}

\rerev{
  Fix a point $x_0\in\Omega$ and take $h$ as a 1-Lipschitz function. 
  Then
  \begin{align}
    \begin{aligned}
    \int h(x) (m(x) - m'(x))\rmd x&=\int\left(h(x)-h\left(x_0\right)\right)(m(x) - m'(x))\rmd x\\
    & \leq \operatorname{diam}(\Omega)\|m-m'\|_{L^1} \\
    & \leq \operatorname{diam}(\Omega) \operatorname{vol}(\Omega)\|m-m'\|_{\infty} .
    \end{aligned}
  \end{align}
  Since $h$ is arbitrary, the result follows immediately with
  \begin{align}
    C_{\Omega}\coloneqq \operatorname{diam}(\Omega) \operatorname{vol}(\Omega) .
  \end{align}
}


%merlin.mbs apsrev4-1.bst 2010-07-25 4.21a (PWD, AO, DPC) hacked
%Control: key (0)
%Control: author (8) initials jnrlst
%Control: editor formatted (1) identically to author
%Control: production of article title (-1) disabled
%Control: page (0) single
%Control: year (1) truncated
%Control: production of eprint (0) enabled
\begin{thebibliography}{0}%
\makeatletter
\providecommand \@ifxundefined [1]{%
 \@ifx{#1\undefined}
}%
\providecommand \@ifnum [1]{%
 \ifnum #1\expandafter \@firstoftwo
 \else \expandafter \@secondoftwo
 \fi
}%
\providecommand \@ifx [1]{%
 \ifx #1\expandafter \@firstoftwo
 \else \expandafter \@secondoftwo
 \fi
}%
\providecommand \natexlab [1]{#1}%
\providecommand \enquote  [1]{``#1''}%
\providecommand \bibnamefont  [1]{#1}%
\providecommand \bibfnamefont [1]{#1}%
\providecommand \citenamefont [1]{#1}%
\providecommand \href@noop [0]{\@secondoftwo}%
\providecommand \href [0]{\begingroup \@sanitize@url \@href}%
\providecommand \@href[1]{\@@startlink{#1}\@@href}%
\providecommand \@@href[1]{\endgroup#1\@@endlink}%
\providecommand \@sanitize@url [0]{\catcode `\\12\catcode `\$12\catcode `\&12\catcode `\#12\catcode `\^12\catcode `\_12\catcode `\%12\relax}%
\providecommand \@@startlink[1]{}%
\providecommand \@@endlink[0]{}%
\providecommand \url  [0]{\begingroup\@sanitize@url \@url }%
\providecommand \@url [1]{\endgroup\@href {#1}{\urlprefix }}%
\providecommand \urlprefix  [0]{URL }%
\providecommand \Eprint [0]{\href }%
\providecommand \doibase [0]{http://dx.doi.org/}%
\providecommand \selectlanguage [0]{\@gobble}%
\providecommand \bibinfo  [0]{\@secondoftwo}%
\providecommand \bibfield  [0]{\@secondoftwo}%
\providecommand \translation [1]{[#1]}%
\providecommand \BibitemOpen [0]{}%
\providecommand \bibitemStop [0]{}%
\providecommand \bibitemNoStop [0]{.\EOS\space}%
\providecommand \EOS [0]{\spacefactor3000\relax}%
\providecommand \BibitemShut  [1]{\csname bibitem#1\endcsname}%
\let\auto@bib@innerbib\@empty
%</preamble>
\end{thebibliography}%


\begin{thebibliography}{10}

  \bibitem{Achdou2012Mean}
  {\sc Y.~Achdou, F.~Camilli, and I.~{Capuzzo-Dolcetta}}, {\em Mean {{Field
    Games}}: {{Numerical Methods}} for the {{Planning Problem}}}, SIAM Journal on
    Control and Optimization, 50 (2012), pp.~77--109.
  
  \bibitem{Achdou2010Meana}
  {\sc Y.~Achdou and I.~{Capuzzo-Dolcetta}}, {\em Mean {{Field Games}}:
    {{Numerical Methods}}}, SIAM Journal on Numerical Analysis, 48 (2010),
    pp.~1136--1162.
  
  \bibitem{Achdou2012Iterative}
  {\sc Y.~Achdou and V.~Perez}, {\em Iterative strategies for solving linearized
    discrete mean field games systems}, Networks \& Heterogeneous Media, 7
    (2012), p.~197.
  
  \bibitem{Appadu2013Numericalc}
  {\sc A.~R. Appadu}, {\em Numerical {{Solution}} of the {{1D Advection-Diffusion
    Equation Using Standard}} and {{Nonstandard Finite Difference Schemes}}},
    Journal of Applied Mathematics, 2013 (2013), p.~734374.
  
  \bibitem{Bagdasaryan2019OptimaL}
  {\sc A.~Bagdasaryan}, {\em Optimal control synthesis for affine nonlinear
    dynamic systems}, Journal of Physics: Conference Series, 1391 (2019),
    p.~012113.
  
  \bibitem{Camilli2012semidiscrete}
  {\sc F.~Camilli and F.~Silva}, {\em A semi-discrete approximation for a first
    order mean field game problem}, Networks \& Heterogeneous Media, 7 (2012),
    p.~263.
  
  \bibitem{Cardaliaguet2010Notes}
  {\sc P.~Cardaliaguet}, {\em Notes on mean field games}, Technical report, 2010.
  
  \bibitem{Cardaliaguet2017Learninga}
  {\sc P.~Cardaliaguet and S.~Hadikhanloo}, {\em Learning in mean field games:
    {{The}} fictitious play}, ESAIM: Control, Optimisation and Calculus of
    Variations, 23 (2017), pp.~569--591.
  
  \bibitem{Carlini2014Fully}
  {\sc E.~Carlini and F.~J. Silva}, {\em A {{Fully Discrete Semi-Lagrangian
    Scheme}} for a {{First Order Mean Field Game Problem}}}, SIAM Journal on
    Numerical Analysis, 52 (2014), pp.~45--67.
  
  \bibitem{Fleming2006Controlled}
  {\sc W.~H. Fleming and H.~M. Soner}, {\em Controlled {{Markov Processes}} and
    {{Viscosity Solutions}}}, Stochastic {{Modelling}} and {{Applied
    Probability}}, {Springer-Verlag}, {New York}, second~ed., 2006.
  
  \bibitem{Gardiner2009Stochastic}
  {\sc C.~Gardiner}, {\em Stochastic {{Methods}}: {{A Handbook}} for the
    {{Natural}} and {{Social Sciences}}}, Springer {{Series}} in {{Synergetics}},
    {Springer-Verlag}, {Berlin Heidelberg}, fourth~ed., 2009.
  
  \bibitem{Gueant2012Mean}
  {\sc O.~Gu{\'e}ant}, {\em Mean field games equations with quadratic
    hamiltonian: A specific approach}, Mathematical Models and Methods in Applied
    Sciences, 22 (2012), p.~1250022.
  
  \bibitem{Gueant2011Mean}
  {\sc O.~Gu{\'e}ant, J.-M. Lasry, and P.-L. Lions}, {\em Mean {{Field Games}}
    and {{Applications}}}, in Paris-{{Princeton Lectures}} on {{Mathematical
    Finance}} 2010, Lecture {{Notes}} in {{Mathematics}}, {Springer Berlin
    Heidelberg}, {Berlin, Heidelberg}, 2011, pp.~205--266.
  
  \bibitem{Kappen2005Linear}
  {\sc H.~J. Kappen}, {\em Linear {{Theory}} for {{Control}} of {{Nonlinear
    Stochastic Systems}}}, Physical Review Letters, 95 (2005), p.~200201.
  
  \bibitem{Kappen2005Path}
  {\sc H.~J. Kappen}, {\em Path integrals and symmetry breaking for optimal
    control theory}, Journal of Statistical Mechanics: Theory and Experiment,
    2005 (2005), p.~P11011.
  
  \bibitem{Lasry2007Mean}
  {\sc J.-M. Lasry and P.-L. Lions}, {\em Mean {{Field Games}}}, Japanese Journal
    of Mathematics, 2 (2007), pp.~229--260.
  
  \bibitem{Lauriere2021Numerical}
  {\sc M.~Lauriere}, {\em Numerical {{Methods}} for {{Mean Field Games}} and
    {{Mean Field Type Control}}}, arXiv:2106.06231 [cs, math],  (2021).
  
  \bibitem{Lavigne2022Generalized}
  {\sc P.~Lavigne and L.~Pfeiffer}, {\em Generalized conditional gradient and
    learning in potential mean field games}, Sept. 2022.
  
  \bibitem{Mickens1999Nonstandard}
  {\sc R.~E. Mickens}, {\em Nonstandard finite difference schemes for
    reaction-diffusion equations}, Numerical Methods for Partial Differential
    Equations, 15 (1999), pp.~201--214.
  
  \bibitem{Mickens2000Nonstandard}
  {\sc R.~E. Mickens}, {\em Nonstandard finite
    difference schemes for reaction\textendash diffusion equations having linear
    advection}, Numerical Methods for Partial Differential Equations, 16 (2000),
    pp.~361--364.
  
  \bibitem{nijmeijer1990nonlinear}
  {\sc H.~Nijmeijer and A.~{Van der Schaft}}, {\em Nonlinear Dynamical Control
    Systems}, chapter 6, {Springer}, 1990.
  
  \bibitem{Perrin2020Fictitious}
  {\sc S.~Perrin, J.~Perolat, M.~Lauriere, M.~Geist, R.~Elie, and O.~Pietquin},
    {\em Fictitious {{Play}} for {{Mean Field Games}}: {{Continuous Time
    Analysis}} and {{Applications}}}, Advances in Neural Information Processing
    Systems, 33 (2020), pp.~13199--13213.
  
  \bibitem{Swiecicki2016Schr}
  {\sc I.~Swiecicki, T.~Gobron, and D.~Ullmo}, {\em Schr\"odinger
    {{Approach}} to {{Mean Field Games}}}, Physical Review Letters, 116 (2016),
    p.~128701.
  
  \bibitem{Todorov2007LinearlySolvable}
  {\sc E.~Todorov}, {\em Linearly-{{Solvable Markov Decision Problems}}}, in
    Advances in {{Neural Information Processing Systems}} 19, {MIT Press}, 2007,
    pp.~1369--1376.
  
\end{thebibliography}
\end{document}